\newtheorem{theorem}{Theorem}
\newtheorem{lemma}[theorem]{Lemma}
\newtheorem{proposition}[theorem]{Proposition}
\newtheorem{remark}[theorem]{Remark}
\journal{Discrete Applied Mathematics} 
\begin{document}

\begin{frontmatter}



\title{{Minimal obstructions to $(s,1)$-polarity in cographs}}


\author[FC]{Fernando Esteban Contreras-Mendoza}
\ead{esteban.contreras.math@ciencias.unam.mx}

\author[CINVESTAV]{C\'esar Hern\'andez-Cruz\corref{cor1}}
\ead{cesar@cs.cinvestav.mx}

\address[FC]{Facultad de Ciencias\\
Universidad Nacional Aut\'onoma de M\'exico\\
Av. Universidad 3000, Circuito Exterior S/N\\
C.P. 04510, Ciudad Universitaria, D.F., M\'exico}

\address[CINVESTAV]{Departamento de Computaci\'on\\
Centro de Investigaci\'on y de Estudios Avanzados del IPN\\
Av. Instituto Polit\'ecnico Nacional 2508\\
C.P. 07300, D.F., M\'exico
}

\cortext[cor1]{Corresponding author}

\begin{abstract}
Let $k,l$ be non negative integers.   A graph $G$ is $(k,l)$-polar
if its vertex set admits a partition $(A,B)$ such that $A$
induces a complete multipartite graph with at most $k$ parts,
and $B$ induces a disjoint union of at most $l$ cliques
with no other edges.   A graph is a cograph if it does not
contain $P_4$ as an induced subgraph.

It is known that $(k,l)$-polar cographs can be characterized
through a finite family of forbidden induced subgraphs, for any
fixed choice of $k$ and $l$.   The problem of determining the
exact members of such family for $k = 2 = l$ was posted by
Ekim, Mahadev and de Werra, and recently solved by Hell,
Linhares-Sales and the second author of this paper.   So far,
complete lists of such forbidden induced subgraphs are known
for $0 \le k,l \le 2$; notice that, in particular, $(1,1)$-polar
graphs are precisely split graphs.   

In this paper we focus on this problem for $(s,1)$-polar
cographs.   As our main result, we provide a recursive
complete characterization of the forbidden induced
subgraphs for $(s,1)$-polar cographs, for every non
negative integer $s$. Adittionaly, we show that cographs
having an $(s,1)$-partition for some integer $s$ (here $s$
is not fixed) can be characterized by forbidding a family of
four graphs.
\end{abstract}

\begin{keyword}
Polar graph \sep cograph \sep forbidden sugraph characterization \sep monopolar graph \sep matrix partition \sep generalized colouring

\MSC 05C	69 \sep 05C70 \sep 05C75
\end{keyword}

\end{frontmatter}



\section{Introduction}
\label{sec:Introduction}

All graphs in this paper are considered to be finite and simple.  
We refer the reader to \cite{bondy2008} for basic terminology and
notation.  In particular, we use $P_k$ and $C_k$ to denote the path
and cycle on $k$ vertices, respectively.

Cographs were introduced by Corneil, Lerchs and Stewart
Burlingham in \cite{corneilDAM3}.   A graph is a complement
reducible graph, or {\em cograph}, if it can be constructed
using the following rules.
\begin{itemize}
	\item $K_1$ is a cograph.
	
	\item If $G$ is a cograph, then its complement $\overline{G}$
		is also a cograph.
		
	\item If $G$ and $H$ are cographs, then the disjoint
		union $G + H$ is also a cograph.
\end{itemize}

In \cite{corneilDAM3}, seven characterizations of this family
were presented; in this work we will extensively use two very
well known of these.   A graph is a cograph if and only if it is
$P_4$-free (it does not contain $P_4$ as an induced
subgraph), if and only if the complement of any of its
nontrivial connected subgraphs is disconnected.

In 1990, Peter Damaschke proved that the class of
cographs is well quasi-ordered by the induced subgraph
relation \cite{damaschkeJGT14}; in other words, every
hereditary property of graphs can be characterized by
a finite family of forbidden induced subgraphs.   Thus,
finding the family of minimal forbidden induced
subgraphs characterizing a given hereditary property
in the class of cographs comes as a natural problem.
The knowledge of such families has two obvious
consequences, first, analyzing the structure of
the members of one of this families (for example,
fixing a certain value of a parameter which the
hereditary property depends on) may shed some light
on the general problem.   Also, the members of these
families are no-certificates for the associated decision
problem.   Consider for example a generalized colouring
problem (partition the set of vertices of a graph into $k$
subsets such that each part has some hereditary
property), if we know the complete list of minimal
forbidden induced subgraphs, an algorithm could be
designed to receive a cograph $G$, decide if it has a
generalized colouring of the desired type, and return
either the colouring of $G$ (a yes-certificate) or one
of the forbidden induced subgraphs (a no-certificate).
Such an algorithm is known as a {\em certifying
algorithm}, and if the validity of the certificates can be
verified efficiently (faster than the original algorithm),
having a certifying algorithm makes it possible to verify
the correctness of its implementations.

In the present work, we will focus on polar partitions.
A {\em polar partition} of a graph $G$ is a partition of
the vertices of $G$ into parts $A$ and $B$ in such a
way that  the subgraph induced by $A$ is a complete
multipartite graph and the subgraph induced by $B$
is the complement of a complete multipartite graph.  A
graph $G$ is {\em polar} if it admits a polar partition,
and is $(s,k)${\em -polar} if it admits a polar partition
$(A,B)$ in which $A$ has at most $s$ parts and
$\overline{B}$ at most $k$ parts. When $s = 1$, an
$(s,k)$-polar graph (partition) is called a {\em monopolar}
graph (partition).   Clearly, for any fixed non negative
integers $s$ and $k$, having an $(s,k)$-partition is a
hereditary property, and thus, as we have already
mentioned, $(s,k)$-polar cographs can be characterized
by a finite familiy of forbidden induced subgraphs. A {\em
cograph minimal $(s,k)$-polar obstruction} is a cograph
which is not $(s,k)$-polar, but such that every proper
induced subgraph is. A {\em cograph $(s,k)$-polar
obstruction} is simply a cograph which is not $(s,k)$-polar.

Polar graphs have received considerable attention in the
literature since Chernyak and Chernyak proved in
\cite{chernyakDM62} that their recognition problem is
$\mathcal{NP}$-complete.   Surprisingly, Farrugia proved
in \cite{farrugiaEJC11} that the problem remains
$\mathcal{NP}$-complete even for monopolar graphs,
and Churchley and Huang proved in \cite{churchleyJGT76},
that monopolar recognition remains
$\mathcal{NP}$-complete even when restricted to
triangle-free graphs.   Regarding these two problems,
the class of claw-free graphs is interesting, it distinguishes
monopolarity, which is polynomial time recognizable, from
polarity, which is $\mathcal{NP}$-complete,
\cite{churchleyJGT76}.

We think that it is worth noticing that polar
partitions are a particular case of a more general
kinf of partition problems, namely, {\em matrix
partitions}.  The concept of a matrix partition
unifies many interesting graph partition problems.
Given a symmetric $n \times n$ matrix $M$,
with entries in $\{ 0, 1, \ast \}$, an $M$-{\em
partition} of a graph $G$ is a partition\footnote{As
it is usual in graph theory, we do not require
every part of the partition to be non-empty.}
$(V_1, \dots, V_n)$ of $V(G)$ such that, for
every $i, j \in \{ 1, \dots, n \}$,
\begin{itemize}
	\item $V_i$ is completely adjacent to
		$V_j$ if $M_{ij} = 1$,
	
	\item $V_i$ is completely non-adjacent
		to $V_j$ if $M_{ij} = 0$,
	
	\item There are no restrictions if $M_{ij}
		= \ast$.
\end{itemize}
It follows from the definition that, in particular, if
$M_{ii} = 0$ ($M_{ii} = 1$), then $V_i$ is a stable
set ($V_i$ is a clique).   The $M$-{\em partition
problem} asks whether or not an input graph $G$
admits an $M$-partition. See \cite{survey} for
a survey on the subject. It is easy to see that
an $(s,k)$-polar partition of $G$ is a matrix partition
in which the matrix $M$ has $s+k$ rows and
columns, the principal submatrix induced by the
first $s$ rows is obtained from an identity matrix by
exchanging $0$'s and $1$'s, the principal submatrix
induced by the last $k$ rows is an identity matrix,
and all other entries are $\ast$.   Therefore, it
follows from \cite{federSIAMJDM16}, that for any
fixed $s$ and $k$, the class of $(s,k)$-polar
graphs can be recognized in polynomial time.
Feder, Hell and Hochst\"attler proved in
\cite{feder2006} that if $M$ is a matrix where all
the off-diagonal entries of the principal submatrix
with zeroes on the diagonal are equal to $a$, all
the off-diagonal entries of the principal submatrix
with only ones on the diagonal are equal to $b$
and all the remaining entries of $M$ are equal to
$c$, with $a, b, c \in \{ 0, 1, \ast \}$, then every
cograph minimal $M$-obstruction has at most
$(k+1)(\ell+1)$ vertices.

For very small values of $s$ and $k$ the minimal
$(s,k)$-polar obstructions are well known; a graph
is $(0,k)$-polar if and only if it is a dijoint union of
at most $k$-cliques, it is $(s,0)$-polar if and only
if it is a complete $s$-partite graph, and it is
$(1,1)$-polar if and only if it is a split graph. It was
shown by Foldes and Hammer \cite{foldesSECGTC}
that a graph is split if and only if it is $\{ 2K_2, C_4,
C_5 \}$-free; it is folklore that a graph is a disjoint
union of at most $k$-cliques if and only if its
independence number is at most $k$ and it is
$P_3$-free, which by complementation implies that
a graph is a complete $s$-partite graph if and only
if it is $\{ K_{s+1}, K_1 + K_2 \}$-free.

For cographs,  Ekim, Mahadev and de Werra
proved in \cite{ekimDAM156} that there are only
eight cograph minimal polar obstructions, and
sixteen cograph minimal $(s,k)$-polar obstructions
when $\min\{ s, k \} = 1$, \cite{ekimDAM171}. In the
same paper, they proposed the problem of finding a
characterization of $(2,2)$-polar cographs; this
problem was solved by Hell, Hern\'andez-Cruz
and Linhares-Sales in \cite{hellDAM}, where they
proved that there are $48$ cograph minimal
$(2,2)$-polar obstructions.   The exhaustive list
of nine cograph minimal $(2,1)$-polar obstructions
was found by Bravo, Nogueira, Protti and Vianna,
\cite{bravo}.

In this work, we show that there are precisely four
cograph minimal monopolar obstructions (see Figure
\ref{fig:essentials}), and provide a recursive
a recursive characterization for cograph minimal
$(s,1)$-polar obstructions.   By taking complements it
trivial to obtain analogous results for $(1,k)$-polar
cographs.

We will denote the complement of $G$ by $\overline{G}$.
We say that a component of $H$ is {\em trivial} or an
{\em isolated vertex} if it is isomorphic to $K_1$. A
{\em $k$-cluster} is the complement of a complete
$k$-partite graph, i.e., a disjoint union of $k$-cliques
without any other edges.

Given graphs $G$ and $H$, the disjoint union of $G$
and $H$ is denoted by $G + H$, and the join of $G$
and $H$ is denoted by $G \oplus H$.   Thus, the sum
of $n$ disjoint copies of $G$ is denoted by $nG$, and
for disjoint graphs $G_1, \dots, G_k$, their disjoint
union is denoted as $\sum_{i=1}^k G_k$.

The rest of the paper is organized as follows.   In
Section \ref{sec:PreliminaryResults}, we prove some
technical lemmas that will be used in Section
\ref{sec:MainResults} to prove our main results.
In Section \ref{sec:Number}, a brief asymptotic estimation
of the number of cograph minimal $(s,1)$-obstructions
is given.   Conclusions and future lines of work are
presented in Section \ref{sec:Conclusions}.

\section{Preliminary results}\label{sec:PreliminaryResults}

We begin this section by characterizing graphs that are
cograph minimal $(s,1)$-obstructions for every integer
$s$, with $s \ge 2$.   We will call such obstructions {\em
essential}.   First, notice that if $G$ is an $(s,1)$-polar graph
with polar partition $(A,B)$, then $B$ is just a clique, and
$G[V - B] = G[A]$, is a complete multipartite graph.   On the
other hand, if $G$ is a graph containing a clique $K$ such
that $G[V - K]$ is a complete multipartite graph, then clearly
$(V-K, K)$ is an $(s,1)$-polar partition of $G$.   This simple
observation is contained in the following remark.

\begin{remark}\label{rem:essential}
Let $G$ be a cograph. If for every clique $K$ of $G$, the
induced subgraph $G[V - K]$ contains $\overline{P_3}$ as
an induced subgraph, then $G$ is not an $(s,1)$-polar
cograph for any integer $s$, $s \ge 2$.
\end{remark}

Now, we can show the existence of some essential
cograph essential $(s,1)$-polar obstructions.

\begin{lemma}\label{lem:EssentialObstructions}
The graphs $K_1 + 2K_2, \overline{K_2} + C_4, 2P_3$
and $K_1 + (\overline{P_3} \oplus \overline{K_2})$
depicted in Figure \ref{fig:essentials} are cograph minimal
$(s,1)$-polar obstructions for every integer $s$, $s\ge 2$.
\end{lemma}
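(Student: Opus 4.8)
The plan is to establish, for each of the four graphs $G$ and for every fixed $s \ge 2$, the two defining properties of a cograph minimal $(s,1)$-polar obstruction: that $G$ is not $(s,1)$-polar, and that every proper induced subgraph of $G$ is. Two reductions make this manageable. First, since having an $(s,1)$-partition is hereditary and since a $(2,1)$-partition is in particular an $(s,1)$-partition for every $s \ge 2$, it suffices for minimality to exhibit, for each single-vertex deletion $G - v$, an explicit $(2,1)$-partition; every smaller induced subgraph then inherits $(s,1)$-polarity. Second, I would record at the outset the standard fact (already quoted in the introduction) that a graph is complete multipartite if and only if it is $\overline{P_3}$-free, since $\overline{P_3} = K_1 + K_2$. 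This turns both halves of the argument into statements about induced copies of $\overline{P_3}$.

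For the non-polarity half, I would simply verify the hypothesis of Remark \ref{rem:essential} for each graph: for every clique $K$ of $G$, the subgraph $G[V - K]$ must contain an induced $\overline{P_3}$. The clique numbers here are small ($2$, $2$, $2$, and $3$ respectively), so the cliques to be removed form a short, explicit list, and a finite case check suffices. The structural reason behind every case is uniform: each of the four graphs carries an ``independent'' piece that survives clique removal (the $K_1$ summand in $K_1 + 2K_2$, $2P_3$ and $K_1 + (\overline{P_3} \oplus \overline{K_2})$, and the $\overline{K_2}$ in $\overline{K_2} + C_4$), together with an edge that also survives; an isolated vertex plus a disjoint edge is precisely $K_1 + K_2 = \overline{P_3}$. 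I would present this as a compact case analysis rather than grinding each subcase, pointing out the surviving isolated vertex and surviving edge in each instance.

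For minimality I would delete one representative vertex from each automorphism orbit and display a $(2,1)$-partition $(A,B)$ of the result, checking that $A$ is complete multipartite (at most two parts) and $B$ is a clique. The partitions are short: typically $B$ is a clique chosen to ``absorb'' the unique obstruction, after which $A$ becomes edgeless, a $K_2$, a $P_3 = K_{1,2}$, or a $C_4 = K_{2,2}$. Recognizing that $P_3$ and $C_4$ are themselves complete multipartite is the small observation that makes these partitions work, and it is also exactly why the hypothesis $s \ge 2$ is needed: some deletions (e.g.\ those producing an induced $C_4$) are $(2,1)$-polar but not split, so they are not $(1,1)$-polar.

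The main obstacle will be the bookkeeping for the densest graph, $K_1 + (\overline{P_3} \oplus \overline{K_2})$. It has the largest clique number and four distinct orbits, so its non-polarity case analysis must also handle triangle cliques, and for one of its vertex deletions the correct $B$ is a triangle rather than an edge; choosing $B$ well is what keeps the residual $A$ genuinely complete multipartite (an edgeless graph in that case) rather than leaving a stray induced $\overline{P_3}$ in $A$. I would therefore carry out this graph last and in the most detail, while treating the first three more briefly once the common pattern is fixed.
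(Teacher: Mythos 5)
Your proposal is correct and follows essentially the same route as the paper's own proof: non-$(s,1)$-polarity for every $s$ via Remark \ref{rem:essential} (checking that removing any clique leaves an induced $\overline{P_3}$), and minimality by exhibiting a $(2,1)$-polar partition of every one-vertex-deleted subgraph, which suffices because $(2,1)$-polarity implies $(s,1)$-polarity for all $s \ge 2$ and the property is hereditary. The paper simply declares these finite verifications routine, while you spell them out; the only slip in your write-up, immaterial to the argument, is the parenthetical claim that $2P_3$ has a $K_1$ summand --- there the role of the ``independent'' vertex is instead played by a leftover vertex of whichever component meets the removed clique (and similarly when the clique removed is the $K_1$ summand itself in the other graphs).
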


\begin{figure}[h!]
\begin{center}
\begin{tikzpicture}
[every circle node/.style ={circle,draw,minimum size= 5pt,
inner sep=0pt, outer sep=0pt},
every rectangle node/.style ={}];

\begin{scope}[scale=1.25]
\node [circle] (1) at (0,0)[]{};
\node [circle] (2) at (1,0)[]{};
\node [circle] (3) at (0,0.8)[]{};
\node [circle] (4) at (1,0.8)[]{};
\node [circle] (5) at (0.5,1.5)[]{};
\foreach \from/\to in {1/2,3/4}
\draw [-, shorten <=1pt, shorten >=1pt, >=stealth, line width=.7pt] 
(\from) to (\to);
\node [rectangle] (1) at (0.5,-0.75){$K_1+2K_2$};
\end{scope}

\begin{scope}[xshift=90, scale=1.2]
\node [circle] (1) at (0,0)[]{};
\node [circle] (2) at (1,0)[]{};
\node [circle] (3) at (0,1)[]{};
\node [circle] (4) at (1,1)[]{};
\node [circle] (5) at (0,1.8)[]{};
\node [circle] (6) at (1,1.8)[]{};
\foreach \from/\to in {1/2,1/3,3/4,2/4}
\draw [-, shorten <=1pt, shorten >=1pt, >=stealth, line width=.7pt] 
(\from) to (\to);
\node [rectangle] (1) at (0.5,-0.75){$\overline{K_2}+C_4$};
\end{scope}

\begin{scope}[xshift=180, scale=1.2]
\node [circle] (1) at (0,0)[]{};
\node [circle] (2) at (0,0.9)[]{};
\node [circle] (3) at (0,1.8)[]{};
\node [circle] (4) at (1,0)[]{};
\node [circle] (5) at (1,0.9)[]{};
\node [circle] (6) at (1,1.8)[]{};
\foreach \from/\to in {1/2,2/3,4/5,5/6}
\draw [-, shorten <=1pt, shorten >=1pt, >=stealth, line width=.7pt] 
(\from) to (\to);
\node [rectangle] (1) at (0.5,-0.75){$2P_3$};
\end{scope}

\begin{scope}[xshift=270, scale=1.25]
\node [circle] (1) at (0.5,0)[]{};
\node [circle] (2) at (-0.1,0.6)[]{};
\node [circle] (3) at (0.5,1.2)[]{};
\node [circle] (4) at (1.1,0.6)[]{};
\node [circle] (5) at (0.5,0.6)[]{};
\node [circle] (6) at (0.5,1.8)[]{};
\foreach \from/\to in {1/2,2/3,3/4,4/1,1/5,2/5,4/5}
\draw [-, shorten <=1pt, shorten >=1pt, >=stealth, line width=.7pt] 
(\from) to (\to);
\node [rectangle] (1) at (0.5,-0.75){$K_1+(\overline{P_3}\oplus
\overline{K_2})$};
\end{scope}

\end{tikzpicture}
\end{center}
\caption{Essential obstructions.}
\label{fig:essentials}
\end{figure}
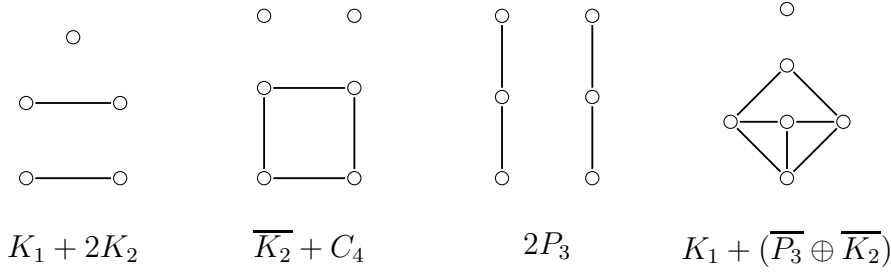

	\begin{proof}
	It is evident that all the graphs shown in Figure
	\ref{fig:essentials} are cographs.   By a simple exploration
	taking into consideration Remark \ref{rem:essential}, it is
	routine to verify that none of these graphs is an $(s,1)$-polar
	cograph for any positive integer $s$. Furthermore it is easy
	to verify that in each of these graphs the deletion of any
	vertex results in a $(2,1)$-polar cograph, so all of them are
	cograph minimal $(s,1)$-polar obstructions for any integer
	$s$ greater than or equal to 2.
	\end{proof}

Notice that all essential obstructions are disconnected,
and, since they are small graphs, it is not hard to imagine
that they will prevent larger disconnected minimal
obstructions to exist.  Our next lemma concretes this
intuitive idea, showing that disconnected cograph minimal
$(s,1)$-polar obstructions have at most two components,
except for $K_1 + 2K_2$ and $\overline{K_2} + C_4$;
some additional restrictions on the structure of such
minimal obstructions are also obtained.

\begin{lemma}\label{lem:TwoComponents}
Let $s$ be an integer, $s\geq 2$. Then every cograph
minimal $(s, 1)$-polar obstruction different from $K_1
+ 2K_2$ and $\overline{K_2} + C_4$ has at most
two connected components.

Moreover, if a cograph
minimal $(s,1)$-polar obstruction has two connected
components and it is neither $2P_3$ nor $2K_{s+1}$,
then one of its components is $K_1$ or $K_2$, and its
other component is not a complete graph.
\end{lemma}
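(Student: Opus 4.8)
The plan is to first translate $(s,1)$-polarity into a statement about cliques and forbidden subgraphs, and then to combine the essential obstructions of Lemma~\ref{lem:EssentialObstructions} with minimality. Recall that $G$ is $(s,1)$-polar exactly when there is a clique $K$ with $G[V-K]$ complete multipartite on at most $s$ parts; equivalently $G[V-K]$ is $\overline{P_3}$-free and $K_{s+1}$-free. Since a complete multipartite graph is connected unless edgeless, I would first record a description of disconnected $(s,1)$-polar cographs. If $(A,B)$ is an $(s,1)$-partition and $A$ has two nonempty parts, then $A$ is connected, so the clique $B$ is exactly the union of the remaining components and $G=A+B$ has only two components; otherwise $A$ is a stable set and $G$ is split. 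In particular, a cograph with at least three components is $(s,1)$-polar if and only if it is split; in particular, at most one of its components then has an edge. Moreover $G=G_1+G_2$ with two components is $(s,1)$-polar if and only if either (a) one component is $K_1$ and the other is split, or (b) one component is a clique and the other is complete multipartite with at most $s$ parts.

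For the first assertion, let $G$ be a minimal obstruction with at least three components. If three components had an edge, then $G$ would contain $3K_2$, hence $K_1+2K_2$, as an induced subgraph; as $K_1+2K_2$ is an obstruction by Lemma~\ref{lem:EssentialObstructions}, minimality forces $G=K_1+2K_2$. So at most two components have edges, and since an edgeless graph is $(s,1)$-polar, at least one does. If exactly two have edges, one edge from each together with an isolated vertex again exhibits $K_1+2K_2$, so $G=K_1+2K_2$. The remaining case is $G=H+tK_1$ with $H$ connected, having an edge, and $t\ge 2$; here the isolated vertices force the complete-multipartite part to be edgeless, so $H+tK_1$ is $(s,1)$-polar if and only if $H$ is split. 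Thus $H$ is not split; deleting an isolated vertex and invoking minimality forces $t=2$, and deleting vertices of $H$ shows $H-v$ is split for every $v$. As $H$ is a $C_5$-free cograph, by the Foldes--Hammer characterization \cite{foldesSECGTC} it contains an induced $2K_2$ or $C_4$; since every proper deletion is split, $H$ has exactly four vertices, and being connected $H=C_4$, giving $G=\overline{K_2}+C_4$.

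For the second assertion, let $G=G_1+G_2$ be a minimal obstruction. From (b), $K_1+K_q$ and $K_2+K_q$ are $(s,1)$-polar for every $q$ (using $s\ge 2$, since $K_2$ is complete multipartite with two parts). Hence if one component lies in $\{K_1,K_2\}$ its partner cannot be complete, or $G$ would be polar. It therefore remains to show that $G_1$ and $G_2$ cannot both have at least three vertices.

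So assume $|G_1|,|G_2|\ge 3$. If both are cliques, then $G=K_p+K_q$ is an obstruction only when $\min\{p,q\}\ge s+1$, and minimality forces $p=q=s+1$, i.e.\ $G=2K_{s+1}$. If exactly one component, say $G_1=K_p$, is a clique, then $G_2$ is not complete multipartite with at most $s$ parts (else $G$ is polar), and the same obstruction persists in $K_{p-1}+G_2$, contradicting minimality. The remaining case, where neither component is a clique, is the heart of the argument: then $G$ is automatically an obstruction, as is $(G_1-v)+G_2$ whenever $G_1-v$ is connected and not a clique. Since $G_1$ is a non-clique cograph on at least three vertices, a non-edge survives after deleting a third vertex and minimality rules out a connected non-clique $G_1-v_0$, so some $v_0$ makes $G_1-v_0$ disconnected; then $G-v_0$ has at least three components and is $(s,1)$-polar, hence split, which (as $G_2$ has an edge) forces $G_1-v_0$ edgeless, so $G_1$ is a star $K_{1,n}$ with $n\ge 2$. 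By symmetry $G_2=K_{1,m}$ with $m\ge 2$, and deleting a leaf keeps a two-non-clique-component obstruction unless $n=m=2$, whence $G=2P_3$. Excluding $2P_3$ and $2K_{s+1}$ as hypothesized completes the proof. I expect this final case, and in particular the reduction to the star structure through the three-component description, to be the main obstacle.
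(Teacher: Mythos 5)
Your proof is correct, but it is organized around a different engine than the paper's. You first establish an explicit if-and-only-if description of disconnected $(s,1)$-polar cographs (at least three components $\Leftrightarrow$ split; exactly two components $\Leftrightarrow$ either $K_1$ plus a split graph, or a clique plus a complete multipartite graph with at most $s$ parts), and then run all deletion/minimality arguments through that characterization. The paper never states such a characterization: for the first assertion it argues directly that a non-split cograph contains $2K_2$ or $C_4$ (Foldes--Hammer, no $C_5$ in a cograph) and pads with vertices from the extra components to exhibit $K_1+2K_2$ or $\overline{K_2}+C_4$; for the second assertion it uses $2P_3$-freeness to force one component to be complete, disposes of the two-complete-components case via $2K_{s+1}$, and then bounds the complete component by $2$ vertices with a partition-extension argument (extend the clique side $B$ by the deleted vertex, or find $\overline{P_3}$ inside $A$). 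Your characterization buys you mechanical verifications that deleted graphs remain obstructions, and it makes your analogue of the paper's partition-extension step (your case of one clique component of size at least $3$) essentially one line. The price is paid in your hardest case, ``neither component is a clique'': your route through forced disconnection, the split structure, stars $K_{1,n}+K_{1,m}$, and leaf deletion is correct but is exactly the case the paper kills instantly --- two connected non-complete components each contain an induced $P_3$, so $G$ contains $2P_3$ and minimality gives $G=2P_3$. The same shortcut (any connected non-split $H$ already contains $2K_2$ or $C_4$, so pad immediately) would also compress your first assertion, where your sub-argument forcing $t=2$, $|H|=4$, $H=C_4$ re-derives what padding gives for free.
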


	\begin{proof}
	Let $G$ be a cograph minimal $(s, 1)$-polar obstruction
	with at least three connected components. Observe
	that since $G$ is not a split graph, $G$ contains $C_4$
	or $2K_2$ as an induced subgraph.   In the former case,
	since $G$ has at least three connected components, $G$
	contains $\overline{K_2} + C_4$ as an induced subgraph. For
	the latter case, again, noting that $G$ has at least
	three components leads to conclude that $G$ contains
	$K_1 + 2K_2$ as an induced subgraph.   By the
	minimality of $G$, the previous observations imply
	that $G$ is isomorphic to $\overline{K_2} + C_4$ or
	$K_1 + 2K_2$. So we have that every cograph minimal
	$(s,1)$-polar obstruction isomorphic to neither
	$K_1 + 2K_2$ nor $\overline{K_2} + C_4$ has at most
	two connected components.
	
	Now, suppose that $G$ is a cograph minimal
	$(s, 1)$-polar obstruction isomorphic to neither
	$2P_3$ nor $2K_{s+1}$, and with two connected
	components. Note that since $G$ is $2P_3$-free, at
	least one of the components of
	$G$ is a complete graph.  If both components of $G$
	are complete graphs, then both of them must have at
	least $s+1$ vertices, otherwise $G$ would be $(s,1)$-polar;
	but in this case $G$ should be isomorphic to $2K_{s+1}$.
	Thus, we may assume that one component of $G$ is a
	complete graph and the other one is not.
	
	Finally, suppose for a contradiction that the
	complete component of $G$ has three or more vertices,
	and let $v$ be one of these vertices.   By the
	minimality of $G$ we have that $G-v$ admits an
	$(s,1)$-polar partition $(A,B)$.   If $B$ is contained
	in the complete component of $G-v$, then $(A, B\cup \{v
	\})$ is an $(s,1)$-polar partition of $G$, a contradiction.
	Hence, $B$ is contained in the non-complete component
	of $G$.   Clearly, $(G-v)-B$ contains $\overline{P_3}$ as
	an induced subgraph, and thus, it cannot be covered
	by $A$, contradicting the choice of $(A,B)$ as an
	$(s,1)$-polar partition of $G-v$.   Since the contradiction
	arises from assuming that the complete component of $G$
	has at least three vertices, then it should have at
	most two vertices.
	\end{proof}

So, it follows from the previous lemma that we can assume
that every disconnected cograph minimal $(s,1)$-obstruction
contains either an isolated vertex or a component isomorphic
to $K_2$.  The following two lemmas describe the structure of the
cograph minimal $(s,1)$-obstructions with two components,
other than the essential obstructions and $2K_{s+1}$.   It is
a bit surprising that for any integer $s$ greater than or equal
to $2$, there are only two such obstructions.

\begin{lemma}\label{lem:H+K2}
Let $s$ be an integer, $s\geq 2$, and let $H$ be a
connected cograph such that $G=H+K_2$ is a cograph
minimal $(s,1)$-polar obstruction. Then $G$ is isomorphic to
$K_2 + (\overline{K_2}\oplus K_s)$.
\end{lemma}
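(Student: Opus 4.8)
The plan is to first squeeze as much structure out of $H$ as the earlier results allow, and then to pin down the two free parameters by repeatedly invoking minimality. First I would observe that $G=H+K_2$ has exactly two connected components, namely the connected graph $H$ and $K_2$. Since one component is $K_2$ and $s\ge 2$, $G$ is isomorphic to neither $2P_3$ (whose components are copies of $P_3$) nor $2K_{s+1}$ (whose components have at least $3$ vertices); hence the ``moreover'' clause of Lemma~\ref{lem:TwoComponents} applies and tells us that $H$ is not complete. Next I would show that $H$ is complete multipartite. By Lemma~\ref{lem:EssentialObstructions} the graph $K_1+2K_2$ is an $(s,1)$-polar obstruction, so by minimality $G$ cannot contain it as a proper induced subgraph, and $G\ne K_1+2K_2$ since the latter has three components; thus $G$ is $(K_1+2K_2)$-free. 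If $H$ contained an induced $\overline{P_3}=K_1+K_2$, then $H+K_2$ would contain $K_1+2K_2$; therefore $H$ is $\overline{P_3}$-free, equivalently complete multipartite. Writing its parts as $V_1,\dots,V_t$, connectivity gives $t\ge 2$ and non-completeness gives some part of size at least $2$.

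The computational heart of the argument is a clean criterion for when a disjoint union $F+K_2$ (and $F+K_1$), with $F$ a connected cograph, is $(s,1)$-polar. Since any clique of such a graph lies inside a single component, I would analyze the few possible positions of the clique $B$ in an $(s,1)$-partition $(A,B)$. Taking $B=K_2$ forces $A=F$, which works exactly when $F$ is complete multipartite with at most $s$ parts; any nonempty $B\subseteq F$ leaves $(F-B)+K_2$, which contains $\overline{P_3}$ unless $F-B=\emptyset$, i.e. unless $F$ is complete (in which case $A=K_2$ has $2\le s$ parts and we win). The conclusion is that $F+K_2$ is $(s,1)$-polar if and only if $F$ is complete multipartite with at most $s$ parts \emph{or} $F$ is a complete graph. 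An entirely analogous case check shows that, for complete multipartite $F$, the graph $F+K_1$ is $(s,1)$-polar if and only if $F$ has at most $s$ parts or $F$ has at most one part of size at least $2$ (the latter being exactly the condition under which some clique of $F$ can be deleted to leave an independent set).

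With these criteria in hand the parameters fall out by deleting one vertex at a time. Non-$(s,1)$-polarity of $G$, together with non-completeness of $H$, forces $H$ to have at least $s+1$ parts. Deleting a vertex of the $K_2$ component yields $H+K_1$, which must be $(s,1)$-polar by minimality; since $H$ has more than $s$ parts, the criterion forces $H$ to have at most one part of size at least $2$, hence exactly one, so $H=\overline{K_a}\oplus K_{t-1}$ with $a\ge 2$. Deleting a vertex of a singleton part yields $(\overline{K_a}\oplus K_{t-2})+K_2$, a union of a still non-complete multipartite graph with $t-1$ parts and $K_2$; minimality and the criterion give $t-1\le s$, so $t=s+1$. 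Finally, deleting a vertex of the part of size $a$ yields $(\overline{K_{a-1}}\oplus K_{t-1})+K_2$: if $a\ge 3$ the first factor is non-complete with $t>s$ parts, so this graph is not $(s,1)$-polar, contradicting minimality; hence $a=2$. Therefore $H=\overline{K_2}\oplus K_s$ and $G=K_2+(\overline{K_2}\oplus K_s)$, as claimed.

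The step I expect to require the most care is the polarity criterion of the second paragraph, specifically the complete-graph loophole: when $F$ is a clique one may place the large clique into $B$ and leave $A=K_2$, so $F+K_2$ stays $(s,1)$-polar no matter how large $F$ is. This is precisely what makes the deletion of a vertex of the big part succeed when $a=2$ (so that $\overline{K_{a-1}}\oplus K_{t-1}$ collapses to the complete graph $K_t$) and fail when $a\ge 3$. Getting this boundary right, rather than any single calculation, is the crux of the argument.
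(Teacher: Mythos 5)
Your proof is correct, and after the common opening it takes a genuinely different route from the paper's. Both arguments begin identically: minimality together with the essential obstruction $K_1+2K_2$ shows that $H$ is $\overline{P_3}$-free, hence complete multipartite, and that $H$ cannot be complete. From that point the paper argues by containment: since $G$ is not $(s,1)$-polar, $H$ cannot be complete multipartite with at most $s$ parts, so $H$ contains an induced $K_{s+1}$; a vertex non-adjacent to some vertex of that clique then yields an induced $\overline{K_2}\oplus K_s$ inside $H$, so $G$ contains $K_2+(\overline{K_2}\oplus K_s)$ as a (not necessarily proper) induced subgraph, and since that graph is itself checked to be an obstruction, minimality of $G$ forces equality. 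You instead prove an exact polarity criterion --- $F+K_2$ is $(s,1)$-polar iff $F$ is complete multipartite with at most $s$ parts or $F$ is complete, and the analogous statement for $F+K_1$ --- and then pin down the multipartition of $H$ by three invocations of minimality (deleting a vertex of the $K_2$, of a singleton part, and of the unique large part), which forces $t=s+1$ and $a=2$ outright. The trade-off: the paper's proof is shorter but outsources a step, namely the verification that $K_2+(\overline{K_2}\oplus K_s)$ is not $(s,1)$-polar, which it dismisses as ``easy to verify''; your proof needs no such check because the isomorphism type of $H$ is derived rather than guessed, and your criterion makes explicit the boundary phenomenon (the complete-graph escape clause) on which such verifications implicitly rest. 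Two minor points you should spell out in a polished write-up: the degenerate positions of the clique $B$ in the criterion ($B$ empty or a single vertex of the $K_2$ component), which force $A$ to be edgeless and are easily dismissed; and the fact that your criteria are stated for connected $F$, which does hold at every deletion step because $t\ge s+1\ge 3$.
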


	\begin{proof}
	Since $G$ is a cograph $(1,s)$-polar obstruction,
	$H$ is not a complete $s$-partite graph, so $H$
	has $K_{s+1}$ or $\overline{P_3}$ as induced
	subgraph. Nevertheless, if $H$ has $\overline{P_3}$
	as an induced subgraph, then $K_1+2K_2$
	is an induced subgraph of $G$, contradicting the
	minimality of $G$. Therefore $H$ has $K_{s+1}$ as
	induced subgraph.
	
	Let $K$ be a subset of $V(H)$ such that
	$H[K]\cong K_{s+1}$. Since $H$ is $\overline{P_3}$-free,
	each vertex of $H$ that is not in $K$ is adjacent
	to every vertex in $K$, except maybe to one of them.
	Moreover, since $G$ is a $(s,1)$-polar obstruction,
	 $H$  is not a complete graph, and in consequence
	there is a vertex $v$ of $H$ that is non-adjacent
	to at least one vertex in $K$. Note that
	$H[K\cup\{v\}]$ is isomorphic to
	$\overline{K_2} \oplus K_s$, and hence,
	$G$ has $K_2 + (\overline{K_2} \oplus K_s)$
	as an induced subgraph. But it is easy to verify that
	$K_2 + (\overline{K_2} \oplus K_s)$ is a cograph
	minimal $(1,s)$-polar obstruction,
	so, from the minimality of $G$ we have that $G$ is
	isomorphic to $K_2 + (\overline{K_2} \oplus K_s)$.
	\end{proof}

\begin{lemma}\label{lem:H+K1}
Let $s$ be an integer, $s\geq 2$, and let $H$ be a
connected cograph such that $G=H+K_1$ is a cograph
minimal $(s,1)$-polar obstruction non isomorphic to
$K_1 + (\overline{K_2} \oplus \overline{P_3})$.
Then $G$ is isomorphic to $K_1 + (C_4\oplus K_{s-1})$.
\end{lemma}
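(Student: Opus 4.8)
The plan is to reduce everything to a clean criterion for when a graph with an added isolated vertex is $(s,1)$-polar, and then to extract structure from the minimality of $G$ together with the essential obstructions of Lemma~\ref{lem:EssentialObstructions}. First I would prove the following criterion: for an arbitrary graph $J$ and an isolated vertex $u$, the graph $J+K_1$ (with $K_1=\{u\}$) is $(s,1)$-polar \emph{if and only if} $J$ is split or $J$ is complete multipartite with at most $s$ parts. Indeed, in any $(s,1)$-partition $(A,B)$ the vertex $u$ lies in $A$ or in $B$; if $u\in B$ then $B=\{u\}$ since $u$ has no neighbours, forcing $A=V(J)$ to induce a complete multipartite graph with at most $s$ parts; if $u\in A$ then $u$ is isolated inside the complete multipartite graph $G[A]$, which is only possible when $A$ is an independent set, so that $(A\setminus\{u\},B)$ is a split partition of $J$. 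Applying the criterion to $G=H+K_1$, the fact that $G$ is not $(s,1)$-polar gives that $H$ is \emph{neither} split \emph{nor} complete multipartite with at most $s$ parts; applying it to every $G-v=(H-v)+K_1$, the minimality of $G$ gives that $H-v$ \emph{is} split or complete multipartite with at most $s$ parts, for each $v\in V(H)$.

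Next I would locate a $C_4$ in $H$. Since $H$ is connected, $G=H+K_1$ has exactly two components and hence is not isomorphic to $K_1+2K_2$; thus, by the minimality of $G$ and because $K_1+2K_2$ is an obstruction (Lemma~\ref{lem:EssentialObstructions}), $H$ cannot contain $2K_2$, for otherwise $G$ would \emph{properly} contain $K_1+2K_2$. As $H$ is a cograph it is $C_5$-free, so the fact that $H$ is not split (hence not $\{2K_2,C_4,C_5\}$-free) forces $C_4\subseteq H$.

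The step I expect to be the main obstacle is to rule out $\overline{P_3}\subseteq H$. The plan is to show that this possibility forces $G\cong K_1+(\overline{P_3}\oplus\overline{K_2})$, contradicting the hypothesis. As $H$ is a connected cograph it is a join $H=F_1\oplus\cdots\oplus F_m$ with $m\ge 2$, and since $\overline{P_3}$ is disconnected its three vertices $\{a,b,c\}$ all lie in a single, necessarily non-complete, factor $F_1$. If some other factor $F_j$ ($j\neq 1$) also contains a non-edge $\{d,e\}$, then $d$ and $e$ are adjacent to all of $\{a,b,c\}$ by the join, so $\{a,b,c,d,e\}$ induces $\overline{P_3}\oplus\overline{K_2}$; then $G$ contains $K_1+(\overline{P_3}\oplus\overline{K_2})$ and, by minimality, equals it, contradicting the hypothesis. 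Otherwise every factor other than $F_1$ is complete, so each of its vertices is universal in $H$; choosing such a universal vertex $v$, neither the fixed $\overline{P_3}$ on $\{a,b,c\}$ nor any induced $C_4$ of $H$ can use $v$, so $H-v$ still contains both, making it neither split nor complete multipartite. By the criterion of the first paragraph $G-v$ is then not $(s,1)$-polar, contradicting the minimality of $G$. Hence $H$ is complete multipartite.

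It then remains to analyse the case where $H$ is complete multipartite, which I expect to be routine counting driven by the local condition from the first paragraph. Write $H$ as complete $t$-partite; since $H$ is not complete multipartite with at most $s$ parts we have $t\ge s+1$, and since $C_4\subseteq H$ at least two parts are nontrivial. Deleting a vertex from a nontrivial part leaves a complete $t$-partite graph, which cannot have at most $s$ parts and so must be split; as a complete multipartite graph is split exactly when at most one part is nontrivial, this forces $H$ to have exactly two nontrivial parts, each of size $2$. Deleting a vertex of a trivial part (using the same criterion) then forces $t=s+1$, for otherwise $H-v$ would retain two nontrivial parts and more than $s$ parts, hence be neither split nor complete multipartite with at most $s$ parts. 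Therefore $H\cong \overline{K_2}\oplus\overline{K_2}\oplus K_{s-1}=C_4\oplus K_{s-1}$ and $G\cong K_1+(C_4\oplus K_{s-1})$, as desired.
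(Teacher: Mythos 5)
Your proof is correct, and it takes a genuinely different route from the paper's. Both arguments open the same way---ruling out $2K_2\subseteq H$ via the essential obstruction $K_1+2K_2$ and extracting an induced $C_4$ from the Foldes--Hammer characterization of split graphs---but they diverge afterwards. The paper works locally around a fixed $C_4$, say $C$: using the cograph property and the obstructions $\overline{K_2}+C_4$ and $K_1+(\overline{P_3}\oplus\overline{K_2})$, it shows that every vertex outside $C$ is either complete to $C$ or adjacent to exactly one non-adjacent pair of $C$, proves that the set $D$ of vertices complete to $C$ is $\overline{P_3}$-free and contains $K_{s-1}$, and then concludes by invoking minimality together with the assertion (stated without proof there) that $K_1+(C_4\oplus K_{s-1})$ is itself a minimal obstruction. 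You work globally instead: your iff-criterion for $(s,1)$-polarity of $J+K_1$ (namely, $J$ is split or complete multipartite with at most $s$ parts) converts minimality into the condition that every $H-v$ is split or complete multipartite with at most $s$ parts; the join decomposition of the connected cograph $H$, combined with the universal-vertex deletion trick, shows $H$ is $\overline{P_3}$-free, hence complete multipartite; and the final vertex-deletion counting pins the multipartition down exactly (two parts of size $2$ and $s-1$ singletons), yielding $H\cong C_4\oplus K_{s-1}$ outright. What your route buys: it never needs the obstruction $\overline{K_2}+C_4$, and it does not rest on the unproved claim that the target graph is a minimal obstruction, so it is more self-contained; your criterion is also a clean reusable tool. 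What the paper's route buys: it is shorter once that minimality claim is granted, and its local analysis of neighborhoods of a fixed $C_4$ avoids appealing to the join decomposition of cographs.
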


	\begin{proof}
	Since $G$ is not a split graph, $G$ have $2K_2$ or $C_4$
	as an induced subgraph, and evidently these subgraphs
	must be induced subgraphs of $H$. Nevertheless, if $H$ have
	$2K_2$ as induced subgraph, then $G$ contains $K_1 +
	2K_2$ as an induced subgraph, and by the minimality of $G$,
	it must be isomorphic to $K_1 + 2K_2$, contradicting that $G$
	has only two connected components. So there is a subset $C$
	of the vertex set of $H$ that induces a $C_4$.
	
	Let $v$ a vertex of $H$ that is not in $C$, which must exist,
	or else $G$ would be $(2,1)$-polar. Then $v$ must
	be adjacent to some vertex of $C$, otherwise $G$ would have
	$\overline{K_2} + C_4$ as induced subgraph,
	which is not possible. On the other hand, since $G$
	is a cograph, $v$ cannot be adjacent to exactly one
	vertex of $C$ nor can be adjacent to exactly two
	adjacent vertices of $C$. Furthermore,
	if $v$ is adjacent to three vertices of $C$, then $C\cup\{v\}$
	induces $\overline{K_2} \oplus \overline{P_3}$, and therefore
	$G$ has
	$K_2 + (\overline{K_2} \oplus \overline{P_3})$ as an induced
	subgraph, which by the minimality of $G$ implies that $G$
	is isomorphic to $K_2 + (\overline{K_2} \oplus
	\overline{P_3})$, but we are assuming that $G$ is not.
	So we have that every vertex of $H$ that is not in $C$
	must be adjacent to every vertex of $C$, or must be
	adjacent to exactly a pair of non adjacent vertices of $C$.
	
	Let $D$ be the graph induced by the subset of vertices of $H$
	that are not in $C$ but such that are adjacent to every vertex
	in $C$. Notice that if $D$ were a complete $(s-2)$-partite
	graph, then $H$ would be a complete $s$-partite graph,
	and therefore $G$  would be a $(s,1)$-polar graph. Thus,
	since we are assuming that $G$ is a
	$(s,1)$-polar obstruction, $D$ cannot be a complete
	$(s-2)$-partite graph, and in consequence $D$ has
	$\overline{P_3}$ or $K_{s-1}$ as an induced subgraph.
	
	Nevertheless, we claim that $D$ is a
	$\overline{P_3}$-free graph.   Otherwise, if $D$
	has $\overline{P_3}$ as an induced subgraph, then,
	together with any two non adjacent vertex of $C$
	this would induce a $\overline{K_2} \oplus
	\overline{P_3}$, which cannot occur. Then
	$D$ has $K_{s-1}$ as an induced subgraph, and hence $G$
	have $K_1 + (C_4 \oplus K_{s-1})$ as induced subgraph. But
	$K_1 + (C_4 \oplus K_{s-1})$ is a cograph minimal
	$(s,1)$-polar obstruction, so $G$ is
	isomorphic to $K_1 + (C_4 \oplus K_{s-1})$.
	\end{proof}
	
	So far, we have characterized all disconnected cograph
	minimal $(s,1)$-polar obstructions, which are a constant
	number for any choice of $s$.   Taking into account that
	the number of minimal $(s,0)$-polar obstructions is two,
	regardless of the choice of $s$, it would seem possible
	to have a constant number of cograph minimal
	$(s,1)$-polar obstructions, we would only need to show
	that the number of such connected obstructions is a
	constant independent of $s$.   Unfortunately, this will
	not be the case.   It is easy to verify that a cograph $G$
	is a minimal $(s,1)$-polar obstruction if and only if
	$\overline{G}$ is a minimal $(1,s)$-polar obstruction.
	Thus, in order to characterize the connected cograph
	minimal $(s,1)$-polar obstructions, we will study
	their complements, the disconnected cograph minimal
	$(1,k)$-polar obstructions. 

\begin{lemma}\label{theo:Components Of(1,k)Obstructions}
Let $k$ be a nonnegative integer, and let $G$ be a cograph
minimal $(1,k)$-polar obstruction.
Then every component of $G$ is nontrivial, and if
$G$ is not isomorphic to $(k+1)K_2$ then $G$
has at most $k$ components.
\end{lemma}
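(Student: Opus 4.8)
The first assertion is the easier one, and I would dispatch it directly from minimality. Suppose some component of $G$ were trivial, say $v$ is an isolated vertex. Since $G$ is a minimal $(1,k)$-polar obstruction, the proper induced subgraph $G-v$ admits a $(1,k)$-polar partition $(A,B)$, where $A$ is a stable set and $B$ induces a disjoint union of at most $k$ cliques. As $v$ is adjacent to no vertex of $G$, the set $A\cup\{v\}$ is again a stable set, so $(A\cup\{v\},B)$ is a $(1,k)$-polar partition of $G$, contradicting that $G$ is an obstruction. Hence every component of $G$ is nontrivial, and in particular each one contains an edge.

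For the bound on the number of components I would argue by contradiction, assuming $G$ has at least $k+1$ components and showing that it must then be $(k+1)K_2$. Using the previous paragraph, each of some fixed collection of $k+1$ distinct components contains an induced edge; choosing one such edge from each yields $k+1$ vertex-disjoint edges lying in pairwise distinct components, which therefore induce a copy of $(k+1)K_2$ in $G$.

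The heart of the argument is then to verify that $(k+1)K_2$ is itself not $(1,k)$-polar. Suppose it had a partition $(A,B)$ with $A$ stable and $G[B]$ a disjoint union of at most $k$ cliques. No edge can have both endpoints in the stable set $A$, so each of the $k+1$ edges contributes at least one vertex to $B$; and since vertices lying in different copies of $K_2$ are non-adjacent, the $B$-vertices coming from distinct edges must lie in distinct cliques of the cluster graph $G[B]$. This forces $G[B]$ to have at least $k+1$ cliques, a contradiction. Thus $(k+1)K_2$ is a non-$(1,k)$-polar induced subgraph of $G$.

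Finally I would invoke minimality once more: every proper induced subgraph of $G$ is $(1,k)$-polar, so the only non-$(1,k)$-polar induced subgraph of $G$ is $G$ itself. Hence the copy of $(k+1)K_2$ found above cannot be a proper induced subgraph, forcing it to span all of $G$, so $G\cong (k+1)K_2$. Taking the contrapositive, if $G\not\cong (k+1)K_2$ then $G$ has at most $k$ components, as desired. The only genuinely delicate point is the counting step showing that $(k+1)K_2$ is not $(1,k)$-polar, and the key observation that makes it work is that the cliques of a cluster graph never span two different components of $G$; consequently $k+1$ pairwise-disjoint edges necessarily consume $k+1$ separate cliques on the $B$-side, exceeding the budget of $k$.
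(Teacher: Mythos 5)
Your proof is correct and follows essentially the same route as the paper's: the isolated-vertex case is handled by the identical minimality argument, and the component bound is obtained by exhibiting an induced $(k+1)K_2$, showing it is not $(1,k)$-polar via the observation that cliques of the cluster side cannot cross components, and then invoking minimality of $G$. The only difference is cosmetic — the paper additionally verifies that $(k+1)K_2$ is a \emph{minimal} obstruction (a fact you rightly note is not needed for this statement), and its counting is phrased as ``some component avoids $B$ entirely'' rather than your dual ``$B$ needs $k+1$ cliques,'' which is the same argument.
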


	\begin{proof}
	Suppose for a contradiction that $G$ has an isolated
	vertex $v$. Since $G$ is a cograph minimal
	$(1,k)$-polar obstruction, $G-v$ admits a $(1,k)$-polar
	partition $(A,B)$, but in such case $(A\cup\{v\},B)$ is
	a $(1,k)$-polar partition of $G$, contradicting the
	minimality of $G$. Thus, we conclude that
	every component of $G$ has at least two vertices.
	
	On the other hand, $H=(k+1)K_2$ is a cograph
	$(1,k)$-polar obstruction, because every $k$-cluster $K$
	of $H$ intersect at most $k$ components of $H$,
	and therefore $H-K$ is a nonempty graph.
	Furthermore for every vertex $v\in V(H)$, $H-v$ is
	isomorphic to $kK_2+K_1$, which is clearly a
	$(1,k)$-polar cograph, so $H$ is a cograph minimal
	$(1,k)$-polar obstruction.
	
	Finally, if $G$ has more than $k$ components, since
	none of them is an isolated vertex, $G$ has $(k+1)K_2$
	as an induced subgraph, so that $G\cong (k+1)K_2$.
	Thus, if $G\not\cong(k+1)K_2$, then $G$ has
	at most $k$ components.
	\end{proof}

\section{Main results}\label{sec:MainResults}

In this section we will obtain a recursive characterization
of disconnected cograph minimal $(1,k)$-polar obstructions
to achieve our goal of characterizing all cograph minimal
$(s,1)$-polar obstructions.   We begin by describing a
construction of a cograph minimal $(1,k)$-polar obstruction
as a disjoint union of smaller minimal polar obstructions.

\begin{lemma}\label{teo:SumsOfObstructions}
Let $t$ be an integer, $t\geq 2$, and for each
$i\in\{1,\ldots, t\}$, let $G_i$ be a connected cograph
minimal $(1,k_i)$-polar obstruction that is a
$(1,k_i+1)$-polar graph. Then, for
$m = t-1 + \sum_{i=1}^t k_i$, the graph $G = G_1 +
\ldots + G_t$ is a cograph minimal $(1,m)$-polar
obstruction that is a $(1,m+1)$-polar graph.
\end{lemma}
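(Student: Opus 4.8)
The plan is to split the statement into the three assertions it bundles together: that $G$ is a cograph, that $G$ is a minimal $(1,m)$-polar obstruction (non-polarity plus minimality), and that $G$ is $(1,m+1)$-polar. The cograph claim is immediate, since the disjoint union of cographs is a cograph. The single structural fact I would isolate at the outset and reuse everywhere is this: in any $(1,\ell)$-polar partition $(A,B)$, the part $B$ is a disjoint union of cliques, each of which is connected and hence entirely contained in one component $G_i$; conversely, because distinct components of $G$ have no edges between them, one may freely paste together independent sets and disjoint-union-of-cliques component by component without creating any new adjacencies. This ``restrict and assemble'' principle is what turns the statement into bookkeeping.

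For non-$(1,m)$-polarity I would argue by contradiction. Suppose $(A,B)$ is a $(1,m)$-polar partition of $G$, and let $b_i$ denote the number of cliques of $B$ lying inside $G_i$, so that $\sum_{i=1}^t b_i \le m$. Restricting $(A,B)$ to $V(G_i)$ gives a partition whose first part is independent and whose second part is a disjoint union of $b_i$ cliques, i.e.\ a $(1,b_i)$-polar partition of $G_i$. Since $G_i$ is \emph{not} $(1,k_i)$-polar, such a partition can exist only when $b_i \ge k_i+1$. Summing yields $m \ge \sum_i b_i \ge \sum_i (k_i+1) = t + \sum_i k_i = m+1$, the desired contradiction.

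For minimality it suffices, by heredity of $(1,m)$-polarity, to exhibit a $(1,m)$-polar partition of $G-v$ for each vertex $v$; say $v \in V(G_j)$. By minimality of $G_j$ the subgraph $G_j-v$ admits a $(1,k_j)$-polar partition, and each remaining $G_i$ ($i\neq j$) admits a $(1,k_i+1)$-polar partition by hypothesis. Assembling these across the (pairwise non-adjacent) components produces a valid partition of $G-v$ whose independent part is independent and whose clique part uses exactly $k_j + \sum_{i\neq j}(k_i+1) = \sum_i k_i + (t-1) = m$ cliques, as required. The very same assembly, now using a $(1,k_i+1)$-polar partition for \emph{every} $i$, gives a $(1,m+1)$-polar partition of $G$ itself, with clique count $\sum_i (k_i+1) = m+1$, which settles the last assertion.

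The argument is essentially a pigeonhole count, so I do not expect a serious obstacle; the only delicate point is aligning the ``at most $k$ cliques'' convention with the exact number of cliques actually used, i.e.\ confirming that ``$G_i$ is not $(1,k_i)$-polar'' genuinely forces $b_i \ge k_i+1$ (and not merely $b_i \ge k_i$), and checking symmetrically that the assembled partitions stay within the stated bounds rather than exceeding them. Getting these inequalities to meet exactly at $m+1$ versus $m$ is what makes the contradiction and the upper constructions tight.
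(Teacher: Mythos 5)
Your proposal is correct and follows essentially the same route as the paper's proof: restrict a hypothetical $(1,m)$-polar partition to the components, use the fact that each $G_i$ fails to be $(1,k_i)$-polar to force at least $k_i+1$ cliques per component and derive the counting contradiction $m \ge m+1$, then establish minimality and $(1,m+1)$-polarity by assembling a $(1,k_j)$-polar partition of $G_j - v$ (respectively $(1,k_i+1)$-polar partitions of every $G_i$) across the non-adjacent components. Your write-up of the contradiction step is in fact slightly cleaner than the paper's, which phrases the same count in a more roundabout way, but the argument is identical in substance.
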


	\begin{proof}
	Let $G_1,\ldots,G_t$ and $G$ be as in the hypothesis.
	We first prove by means of a contradiction that
	$G$ is a cograph $(1,m)$-polar obstruction.
	Suppose that $G$ admits a $(1,m)$-polar partition
	$(A,B)$, and define for each $i\in\{1,2,\ldots,t\}$
	the sets $A_i=V(G_i)\cap A$ and $B_i=V(G_i)\cap B$.
	Note that every component of $G[B]$ is contained
	in a component of $G$. Denote the number
	of components of $G_i[B_i]$ by $l_i$; if $k_i < l_i$
	for every $i \in \{ 1, \dots, t \}$, we would have
	$m+1 = \sum_{i=1}^t(k_i+1) \leq \sum_{i=1}^t l_i = m$,
	a contradiction.  Hence, there is $j \in \{1,\ldots,
	t\}$ such that $l_j\leq k_j$.   Nevertheless,
	we have that $G_i$ is a cograph $(1,k_i)$-polar
	obstruction and $(A_i,B_i)$ is a $(1,l_i)$-polar
	partition of $G_i$, so that $k_i<l_i$
	for every $i\in\{1,\ldots,t\}$, contradicting our
	previous argument.   Since the contradiction arises
	from assuming that $G$ is a $(1,m)$-polar cograph,
	we conclude that $G$ is a cograph $(1,m)$-polar
	obstruction.
	
	Now we prove that $G$ is minimal. If $v\in V(G)$,
	then $v\in V(G_j)$ for some $j\in\{1, \ldots,t\}$,
	say, without loss of generality,
	for $j=1$. Since $G_1$ is a cograph minimal
	$(1,k_1)$-polar obstruction, the graph $G_1-v$
	admits a $(1,k_1)$-polar partition $(A_1,B_1)$,
	and since by hypothesis $G_i$ is a $(1,k_i+1)$-polar cograph
	for each $i\in\{2,3,\ldots,t\}$, we have that $G_i$
	admits a $(1,k_i+1)$-polar partition $(A_i,B_i)$.
	Therefore, $G-v$ is a $(1,m)$-polar cograph
	with polar partition
	$(\bigcup_{i=1}^t A_i, \bigcup_{i=1}^t B_i)$.
	Thus, $G$ is a cograph minimal $(1,m)$-polar obstruction.
	
	Finally, since for each $i\in\{1,2,\ldots,t\}$ the graph
	$G_i$ admits a $(1,k_i+1)$-polar partition $(A_i,B_i)$,
	then $(\bigcup_{i=1}^t A_i, \bigcup_{i=1}^t B_i)$
	is a $(1,m+1)$-polar partition of $G$, and therefore
	$G$ is a $(1,m+1)$-polar cograph.
	\end{proof}
	
Our goal is to prove that the cographs described in Lema
\ref{teo:SumsOfObstructions} are the only disconnected cograph
minimal $(1,k)$-polar obstructions.   In order to achieve this we
need the following technical, yet simple, result.

\begin{lemma}\label{cor:SumsOfObstructions}
Let $t$ be an integer, $t\geq 2$, and for each
$i\in\{1,\ldots, t\}$, let $G_i$ be a connected cograph
minimal $(1,k_i)$-polar obstruction that is a
$(1,k_i+1)$-polar graph.   Then, for
$m = t-1 + \sum_{i=1}^t k_i$ and for any non
negative integer $\mu$, $\mu < m$,
$G$ is not a cograph minimal $(1,\mu)$-polar obstruction.
\end{lemma}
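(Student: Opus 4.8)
The plan is to show, for each $\mu$ with $\mu < m$, that $G = G_1 + \cdots + G_t$ has a \emph{proper} induced subgraph which is not $(1,\mu)$-polar. This alone prevents $G$ from being a minimal $(1,\mu)$-polar obstruction, since a minimal obstruction must have \emph{all} of its proper induced subgraphs $(1,\mu)$-polar. For a graph $H$ write $\beta(H)$ for the least integer $k$ with $H$ being $(1,k)$-polar, so that $H$ is $(1,k)$-polar precisely when $k \ge \beta(H)$. Two elementary facts drive the argument. First, $\beta$ is additive over disjoint unions, $\beta(H_1 + \cdots + H_r) = \sum_j \beta(H_j)$, because an independent set together with a cluster in a disjoint union splits componentwise and cliques lying in distinct components never interact. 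Second, by hypothesis each $G_i$ is $(1,k_i+1)$-polar but not $(1,k_i)$-polar, so $\beta(G_i) = k_i + 1$ and hence $\beta(G) = \sum_{i=1}^t (k_i+1) = m+1$; in particular $G$ is not $(1,\mu)$-polar for any $\mu < m$.

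First I would reduce everything to deleting a single vertex. Fix any component $G_j$ and any vertex $v \in V(G_j)$. By additivity,
\[
\beta(G - v) = \beta(G_j - v) + \sum_{i \ne j} \beta(G_i) = \beta(G_j - v) + (m+1) - (k_j+1) = m - k_j + \beta(G_j - v).
\]
Thus $\beta(G-v) = m$ exactly when $\beta(G_j - v) = k_j$, and in that case $G - v$ is a proper induced subgraph that is not $(1,\mu)$-polar for every $\mu \le m-1$, i.e.\ for every $\mu < m$, settling all such $\mu$ simultaneously.

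So the whole statement comes down to the single estimate $\beta(G_j - v) = k_j$. Since $G_j$ is a minimal $(1,k_j)$-polar obstruction, $G_j - v$ is $(1,k_j)$-polar, giving $\beta(G_j - v) \le k_j$ for every $v$; the real content is the reverse inequality $\beta(G_j - v) \ge k_j = \beta(G_j) - 1$, which is exactly the assertion that deleting one vertex lowers the monopolar threshold by at most one. I expect this to be the main obstacle. The plan for it is to start from an optimal monopolar partition $(A',B')$ of $G_j - v$ and reinsert $v$ into a monopolar partition of $G_j$ costing at most one additional clique. The delicate point is that one cannot merely extend $(A',B')$: the new vertex $v$ may be adjacent both to vertices of $A'$ and to vertices of several cliques of $B'$, so the independent-set side generally must be \emph{rechosen} rather than enlarged (as already happens in small cases such as $C_4$, where $C_4-v \cong P_3$ forces threshold exactly $k_j=1$). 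I would resolve this by exploiting that $G_j$ is a connected cograph and using its join decomposition to relocate the independent set, so that a single vertex $v$ can never force more than one extra clique.

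Once $\beta(G_j - v) \ge k_j$ is in hand, combining it with $\beta(G_j - v) \le k_j$ yields equality for \emph{every} vertex $v$, so any choice of $v$ suffices. Then $\beta(G-v) = m$, and $G-v$ is a proper induced subgraph that is not $(1,\mu)$-polar for any $\mu < m$, proving that $G$ is not a cograph minimal $(1,\mu)$-polar obstruction.
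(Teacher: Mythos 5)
Your global framework is sound: the threshold $\beta$ is indeed additive over disjoint unions, $\beta(G)=m+1$ follows, and exhibiting a single proper induced subgraph of $G$ that is not $(1,\mu)$-polar would indeed rule out minimality for all $\mu<m$ at once. The gap is the step you yourself flag as ``the real content'': the inequality $\beta(G_j-v)\ge k_j$ for \emph{every} vertex $v$, i.e.\ the assertion that deleting one vertex lowers the monopolar threshold by at most one. This assertion is \emph{false}, even for the exact graphs $G_j$ allowed by the hypothesis. Take $G_j=\overline{K_2}\oplus(K_2+\overline{K_s})$, the complement of the obstruction $K_2+(\overline{K_2}\oplus K_s)$ of Lemma~\ref{lem:H+K2}; this is a connected cograph minimal $(1,s)$-polar obstruction with $\beta(G_j)=s+1$, so $k_j=s$ and it is $(1,s+1)$-polar. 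Deleting one vertex of the $K_2$ yields $\overline{K_2}\oplus\overline{K_{s+1}}=K_{2,s+1}$, whose threshold is $2$; deleting one vertex of the $\overline{K_2}$ yields $K_1\oplus(K_2+\overline{K_s})$, whose threshold is $1$ (put the $\overline{K_s}$ together with one vertex of the $K_2$ in the independent side). So for $s\ge 3$ the threshold can drop from $s+1$ to $2$, or even to $1$, after a single deletion; only the vertices of the $\overline{K_s}$ behave as you claim. Consequently your concluding sentence ``any choice of $v$ suffices'' is wrong, and no reinsertion argument (putting $v$ back at the cost of at most one extra clique) can exist, because the statement it would prove is false.

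What survives of your plan is the weaker existential claim: \emph{some} vertex $v$ of some component satisfies $\beta(G_j-v)=k_j$. That is true, but verifying it requires knowing which connected cographs can occur as the $G_j$ --- in the example above one must know to pick $v$ inside $\overline{K_s}$ --- and this structural knowledge is precisely what the paper brings to bear. The paper's proof does not delete vertices at all: using the characterization of disconnected cograph minimal $(s,1)$-polar obstructions (Lemmas~\ref{lem:EssentialObstructions} to~\ref{lem:H+K1}, complemented), it checks that each $G_i$ contains, for every $\sigma<k_i$, a proper induced subgraph that is itself a cograph minimal $(1,\sigma)$-polar obstruction and is $(1,\sigma+1)$-polar; writing $\mu=t-1+\sum_{i=1}^t s_i$ with $0\le s_i\le k_i$, it replaces each $G_i$ by such a subgraph $H_i$ (for the indices with $s_i<k_i$) and invokes Lemma~\ref{teo:SumsOfObstructions} to conclude that $H_1+\dots+H_t$ is a minimal $(1,\mu)$-polar obstruction properly contained in $G$. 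So to repair your proof you would either have to prove the existential version of your deletion claim --- which in effect reproduces that case analysis --- or follow the paper's route directly.
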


	\begin{proof}
	By considering the different cases in the characterization
	of disconnected cograph minimal $(s,1)$-polar obstructions,
	it is not hard to verify that any connected cograph minimal
	$(1,s)$-polar obstruction $G$ that is $(1, s+1)$-polar contains,
	for any non negative integer $\sigma$ such that $\sigma < s$,
	a proper induced subgraph $G'$ that is both, a cograph
	minimal $(1,\sigma)$-polar obstruction and a
	$(1,\sigma +1)$-polar graph.
	
	Let $\mu$ be a positive integer such that $\mu < m$, and
	let $s_1,\dots,s_t$ be integers such that,
	for $i\in\{1,\dots,t\}$, $0 \le s_i \le k_i$ and 
	$\mu = t-1 + \sum_{i=1}^t s_i$. By the choice of $\mu$,
	$s_i < k_i$ for at least one $i \in \{ 1, \dots, t \}$.
	For each $i\in\{1,\dots,t\}$, if $s_i < k_i$ let $H_i$ be a
	proper induced subgraph of $G_i$ that is both, a cograph
	minimal $(1,s_i)$-polar obstruction and a
	$(1,s_i +1)$-polar graph,
	otherwise let $H_i = G_i$.
	Then, by Lemma \ref{teo:SumsOfObstructions},
	$H = H_1 + \dots + H_t$ is a
	cograph minimal $(1,\mu)$-polar obstruction that is a
	proper induced subgraph of $G$, and therefore $G$ is
	not a cograph minimal $(1,\mu)$-polar obstruction.
	\end{proof}

We conclude the analysis of the disconnected cograph
minimal $(1,k)$-polar obstructions by showing that
the cographs described in Lema \ref{teo:SumsOfObstructions}
are the only ones.

\begin{lemma}\label{lem:Decomposition(1,k)Obstructions}
Let $G$ be a disconnected cograph minimal $(1,k)$-polar
obstruction with components $G_1, \ldots, G_t$.
Then, there exist non negative integers $k_1, \ldots, k_t$
such that for each $i \in \{ 1, \ldots, t \}$, $G_i$ is a
connected cograph minimal $(1, k_i)$-polar obstruction
that is a $(1,k_i+1)$-polar cograph, and
$\sum_{i=1}^t k_i=k-t+1$.
\end{lemma}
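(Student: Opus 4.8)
The plan is to read off the numbers $k_i$ directly from the monopolar structure of the components and then show that Lemma~\ref{teo:SumsOfObstructions} runs in reverse. For a monopolar cograph $F$, let $\beta(F)$ denote the least integer $\ell$ for which $F$ is $(1,\ell)$-polar; this is well defined for each component $G_i$, since $t\ge 2$ forces $G_i$ to be a proper induced subgraph of the minimal obstruction $G$ and hence itself $(1,k)$-polar. The single fact I would lean on throughout is that $\beta$ is additive over connected components: any $(1,\ell)$-polar partition $(A,B)$ restricts componentwise and the cliques of $B$ lie inside components, so $\beta(F)=\sum_j\beta(F_j)$ whenever $F$ has components $F_j$. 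By Lemma~\ref{theo:Components Of(1,k)Obstructions} each $G_i$ is nontrivial, hence has an edge and is not $(1,0)$-polar, so $\beta(G_i)\ge 1$; I then set $k_i:=\beta(G_i)-1$, a nonnegative integer for which $G_i$ is $(1,k_i+1)$-polar but not $(1,k_i)$-polar. Additivity and the hypothesis that $G$ is not $(1,k)$-polar give $\beta(G)=\sum_{j}(k_j+1)=t+\sum_j k_j>k$, that is,
\[
\sum_{j=1}^{t}k_j\ \ge\ k-t+1 .
\]

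The crux of the argument, and the step I expect to be the main obstacle, is upgrading each $G_i$ from a mere non-$(1,k_i)$-polar graph to a genuine \emph{minimal} $(1,k_i)$-polar obstruction; for this I would use the minimality of $G$ one vertex at a time. Fix $i$ and $v\in V(G_i)$. Since $G$ is a minimal $(1,k)$-polar obstruction, $G-v$ is $(1,k)$-polar, so $\beta(G-v)\le k$. The components of $G-v$ are the $G_j$ with $j\ne i$ together with the components of $G_i-v$, so additivity yields $\beta(G_i-v)+\sum_{j\ne i}(k_j+1)\le k$, whence $\beta(G_i-v)\le k-(t-1)-\sum_{j\ne i}k_j$. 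Substituting the rearranged inequality $\sum_{j\ne i}k_j\ge k-t+1-k_i$ collapses this to $\beta(G_i-v)\le k_i$, so $G_i-v$ is $(1,k_i)$-polar. As $v$ was arbitrary and $(1,k_i)$-polarity is hereditary, every proper induced subgraph of $G_i$ is $(1,k_i)$-polar; together with the fact that $G_i$ itself is not $(1,k_i)$-polar, this shows each $G_i$ is a connected cograph minimal $(1,k_i)$-polar obstruction that is $(1,k_i+1)$-polar.

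With this established, each $G_i$ satisfies the hypotheses of Lemma~\ref{teo:SumsOfObstructions}, and since $G$ is disconnected we have $t\ge 2$; that lemma therefore applies to $G=G_1+\dots+G_t$ and certifies that $G$ is a minimal $(1,m)$-polar obstruction which is moreover $(1,m+1)$-polar, where $m=t-1+\sum_{i=1}^{t}k_i$. It remains only to identify $m$ with $k$. If $k>m$, then $G$ would be $(1,m+1)$-polar with $m+1\le k$, hence $(1,k)$-polar, contradicting that $G$ is a $(1,k)$-polar obstruction; if $k<m$, then Lemma~\ref{cor:SumsOfObstructions} states that $G$ cannot be a minimal $(1,k)$-polar obstruction, again a contradiction. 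Therefore $k=m=t-1+\sum_{i=1}^{t}k_i$, which rearranges to $\sum_{i=1}^{t}k_i=k-t+1$, as required.
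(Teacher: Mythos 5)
Your proof is correct, and it takes a genuinely different route through the core of the argument. The paper defines each $k_i$ ``from below'': $k_i$ is the maximum over $v \in V(G_i)$ of the least $\ell$ such that $G_i - v$ is $(1,\ell)$-polar. With that definition, minimality of each $G_i$ is automatic, and the work goes into showing (a) that $G_i$ is not $(1,k_i)$-polar, which the paper proves by an exchange argument (replacing the restriction to $G_1 - v$ of a hypothetical $(1,k)$-polar partition of $G-v$ by a $(1,k_1)$-polar partition of all of $G_1$), and (b) that $G_i$ is $(1,k_i+1)$-polar, which the paper obtains only by passing to complements and invoking the classification of disconnected minimal $(s,1)$-polar obstructions (Lemmas \ref{lem:EssentialObstructions}--\ref{lem:H+K1}). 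You define $k_i$ ``from above'' as $\beta(G_i)-1$, so non-$(1,k_i)$-polarity and $(1,k_i+1)$-polarity hold by construction, and the burden shifts to minimality of each $G_i$, which you settle by a clean arithmetic argument combining additivity of $\beta$ over components with the minimality of $G$; this step is sound, including the substitution that collapses the bound to $\beta(G_i - v) \le k_i$. Both proofs then finish identically, using Lemmas \ref{teo:SumsOfObstructions} and \ref{cor:SumsOfObstructions} to force $k = m = t-1+\sum_{i=1}^{t}k_i$. What your route buys: it is more self-contained and elementary, bypassing the structural lemmas and complementation entirely (they still enter indirectly, since the paper's proof of Lemma \ref{cor:SumsOfObstructions} rests on the classification). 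What the paper's route buys: it identifies the components explicitly as complements of $2K_{k_i+1}$, $K_2+(\overline{K_2}\oplus K_{k_i})$ or $K_1+(C_4\oplus K_{k_i-1})$, information reused in Section \ref{sec:Number} for the counting estimates. One small presentational point: when asserting $\beta(F)=\sum_j \beta(F_j)$ you argue only the restriction direction ($\sum_j \beta(F_j)\le\beta(F)$); the converse inequality, by gluing optimal partitions of the components, should be stated as well, though it is immediate.
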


	\begin{proof}
	Since $G$ is a cograph minimal $(1,k)$-polar obstruction
	we have that, for each $i \in \{1,\dots,t\}$, the component
	$G_i$ of $G$ is a $(1, k)$-polar graph.	
	For each $i\in \{1,\dots,t\}$ and each $v\in V(G_i)$, let
	$k_v$ be the minimum non negative integer such that
	$G_i -v$ is a $(1, k_v)$-polar graph, and let $k_i$ be the
	maximum of $k_v$ on all the vertices $v$ of $G_i$, that is,
	$k_i = \max\{k_v \colon\ v\in V(G_i)\}$.
	Note that for each $i\in\{1,\dots,t\}$ and any
	$v\in V(G_i)$, $G_i -v$ is a $(1,k_i)$-polar graph.
	
	Moreover, we claim that for each $i\in \{1,\dots,t\}$,
	the graph $G_i$ is not $(1,k_i)$-polar.  Suppose for a
	contradiction that for some $i\in\{1,\dots,t\}$, $G_i$ is a
	$(1,k_i)$-polar graph, we will assume $i=1$ without
	loss of generality.  Let $\{X_1, Y_1\}$ be a
	$(1,k_1)$-polar partition of $G_1$, and let $v \in
	V(G_1)$ such that $G_1-v$ is $(1,k_1)$-polar but it 
	is not $(1,k_1-1)$-polar.  Let $\{A, B\}$
	be a $(1,k)$-polar partition of $G-v$.   For every $i \in
	\{ 1, \dots, t \}$ define $A_i$ and $B_i$ in the following
	way, $A_1 = A \cap V(G_1 -v)$, $B_1 = B \cap
	V(G_1 -v)$, and for each $j\in \{2,\dots,t\}$, let
	$A_j = A\cap V(G_j)$ and $B_j = B\cap V(G_j)$.  Then
	$\{(A\setminus A_1) \cup X_1, (B\setminus B_1)\cup Y_1\}$
	is a $(1,k)$-polar partition of $G$, a contradiction.
	
	Thus, for each $i\in \{1,\dots,t\}$, $G_i$ is a connected
	cograph	minimal $(1,k_i)$-polar obstruction that is
	$(1,k)$-polar, and in consequence $\overline{G_i}$ is a
	disconnected cograph minimal $(k_i,1)$-polar obstruction
	that is a $(k,1)$-polar graph.
	Observe that by Lemmas \ref{lem:EssentialObstructions} to
	\ref{lem:H+K1} this implies that $\overline G_i$ is
	one of $2K_{k_i +1}, K_2+(\overline{K_2}\oplus K_{k_i})$
	or $K_1+(C_4\oplus K_{k_i -1})$, and then, $\overline{G_i}$
	is a disconnected cograph minimal $(k_i,1)$-polar obstruction
	that is $(k_i +1, 1)$-polar.  Equivalently, we have that
	$G_i$ is	a connected cograph minimal
	$(1,k_i)$-polar obstruction that
	is a $(1, k_i +1)$-polar graph.
	
	Finally, by Lemmas \ref{teo:SumsOfObstructions} and
	\ref{cor:SumsOfObstructions} we have that, for
	$m = t - 1 + \sum_{i=1}^t k_i$, $G$ is a cograph minimal
	$(1,m)$-polar obstruction that is a $(1,m+1)$-polar graph,
	and that $G$ is not a cograph minimal $(1,\mu)$-polar
	obstruction for any integer $\mu$ with $0\le \mu < m$.
	Thus, since we are assuming that $G$ is a cograph minimal
	$(1,k)$-polar obstruction,
	we have that $k=m$ and the result follows.

	\end{proof}
		
	Hence, we are ready to state our main result.
	
	\begin{theorem}
	Let $G$ be a cograph, and let $s$ be an integer, $s \ge 2$.
	Then $G$ is a minimal $(s,1)$-polar obstruction if and only
	if it is one of the following:
	\begin{itemize}
		\item One of the four essential obstructions depicted
			in Figure \ref{fig:essentials}, i.e., $K_1 + 2K_2,
			\overline{K_2} + C_4, 2P_3$ or $K_1 +
			(\overline{P_3} \oplus \overline{K_2})$.
		
		\item $2 K_{s+1}$.
		
		\item $K_2 + (\overline{K_2} \oplus K_s)$.
		
		\item $K_1 + (C_4 \oplus K_{s-1})$.
		
		\item $\overline{(s+1) K_2}$.
		
		\item The complement of $G$ is disconnected with
			components $G_1, \dots, G_t$, such that $t \le s$,
			$G_i$ is the complement of a non-essential
			disconnected cograph minimal $(s_i, 1)$-polar
			obstruction and $\sum_{i=1}^t s_i = s-t+1$.
	\end{itemize}
	\end{theorem}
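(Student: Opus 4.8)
The plan is to split the argument by the connectivity of $G$, using two facts from the excerpt: first, a cograph on at least two vertices is connected precisely when its complement is disconnected; and second, $G$ is a cograph minimal $(s,1)$-polar obstruction if and only if $\overline{G}$ is a cograph minimal $(1,s)$-polar obstruction. Under this dictionary the first four items of the list will be exactly the disconnected obstructions, while the last two will be exactly the connected ones, so it suffices to treat these two regimes separately and verify each direction of the equivalence.

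For the disconnected regime I would simply assemble the preliminary lemmas. Lemma~\ref{lem:TwoComponents} forces every disconnected minimal obstruction other than $K_1+2K_2$ and $\overline{K_2}+C_4$ to have at most two components, and, when it has exactly two and is neither $2P_3$ nor $2K_{s+1}$, to have one component equal to $K_1$ or $K_2$ with the other non-complete. Lemma~\ref{lem:H+K2} then identifies the $K_2$ case as $K_2+(\overline{K_2}\oplus K_s)$, and Lemma~\ref{lem:H+K1} identifies the $K_1$ case as either the essential graph $K_1+(\overline{P_3}\oplus\overline{K_2})$ or as $K_1+(C_4\oplus K_{s-1})$. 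For the converse direction, Lemma~\ref{lem:EssentialObstructions} supplies the four essential obstructions, the minimality of $K_2+(\overline{K_2}\oplus K_s)$ and of $K_1+(C_4\oplus K_{s-1})$ is verified within the proofs of Lemmas~\ref{lem:H+K2} and~\ref{lem:H+K1}, and a one-line check handles $2K_{s+1}$: any clique lies in a single component, so the other $K_{s+1}$ survives in $A$ and cannot be covered by a complete $s$-partite graph, while deleting any vertex leaves $K_s+K_{s+1}$, which is $(s,1)$-polar. This shows the disconnected obstructions are exactly the first four bullet points.

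For the connected regime I pass to the complement, where $\overline{G}$ becomes a disconnected cograph minimal $(1,s)$-polar obstruction. Lemma~\ref{theo:Components Of(1,k)Obstructions} gives a dichotomy: either $\overline{G}\cong(s+1)K_2$, whence $G\cong\overline{(s+1)K_2}$ is the fifth bullet, or $\overline{G}$ has at most $s$ components. In the latter case Lemma~\ref{lem:Decomposition(1,k)Obstructions} writes $\overline{G}=G_1+\dots+G_t$ with $t\le s$, each $G_i$ a connected cograph minimal $(1,s_i)$-polar obstruction that is $(1,s_i+1)$-polar, and $\sum_{i=1}^t s_i=s-t+1$; complementing componentwise, $\overline{G_i}$ is then a disconnected cograph minimal $(s_i,1)$-polar obstruction that is $(s_i+1,1)$-polar, which is exactly the sixth bullet. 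The converse uses the minimality of $(s+1)K_2$ from Lemma~\ref{theo:Components Of(1,k)Obstructions} for the fifth bullet, and for the sixth bullet it is precisely Lemma~\ref{teo:SumsOfObstructions} applied with $m=t-1+\sum_{i=1}^t s_i=s$, which certifies that $G_1+\dots+G_t$ is a minimal $(1,s)$-obstruction and hence that its complement $G$ is a minimal $(s,1)$-obstruction.

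I expect the main point of care to be the bookkeeping around the exceptional graph $(s+1)K_2$ rather than any single deep step: it is the unique disconnected $(1,s)$-obstruction with more than $s$ components, so it must be removed from the family of the sixth bullet (whose constraint $t\le s$ is exactly the bound of Lemma~\ref{theo:Components Of(1,k)Obstructions}) and listed on its own as the fifth bullet. A secondary point is the translation between the hypotheses of Lemma~\ref{lem:Decomposition(1,k)Obstructions} and the wording of the theorem, namely that for a disconnected $(s_i,1)$-obstruction, being $(s_i+1,1)$-polar is equivalent to being non-essential; this is immediate once one observes that an essential obstruction is never $(s,1)$-polar for any $s$, whereas each non-essential disconnected obstruction---that is, $2K_{s_i+1}$, $K_2+(\overline{K_2}\oplus K_{s_i})$ or $K_1+(C_4\oplus K_{s_i-1})$, with the evident degeneracies for $s_i\in\{0,1\}$---is readily seen to be $(s_i+1,1)$-polar.
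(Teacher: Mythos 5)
Your proposal is correct and follows essentially the same route as the paper: the paper's own proof of this theorem is literally ``an immediate consequence of all previous lemmas,'' and your argument is precisely the natural assembly of those lemmas (Lemmas~\ref{lem:EssentialObstructions}--\ref{lem:Decomposition(1,k)Obstructions}), split by connectivity and using complementation for the connected case. If anything, you supply details the paper leaves implicit---the direct verification that $2K_{s+1}$ is a minimal obstruction, and the dictionary between ``non-essential'' and ``$(s_i+1,1)$-polar'' for disconnected obstructions (the latter appears inside the proof of Lemma~\ref{lem:Decomposition(1,k)Obstructions})---so your write-up is, if anything, more complete than the paper's.
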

	
	\begin{proof}
	It is an immediate consequence of all previous lemmas.
	\end{proof}

To finish this section, we will prove that the four essential
obstructions in Figure \ref{fig:essentials} constitute the set
of minimal forbidden induced subgraphs for a cograph to
admit an $(s,1)$-polar partition for some integer $s$, $s \ge
2$.


\begin{lemma}\label{lem:OrderOfObstructions}
Let $s$ be an integer. If $G$ is a cograph minimal
$(s,1)$-polar obstruction that is not essential,
then the order of $G$ is at least $s+1$.
\end{lemma}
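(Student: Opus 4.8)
The plan is to read off the bound directly from the classification of non-essential cograph minimal $(s,1)$-polar obstructions furnished by the preceding (main) theorem. Since $G$ is assumed to be a non-essential cograph minimal $(s,1)$-polar obstruction, it must be one of the four parametrized graphs $2K_{s+1}$, $K_2 + (\overline{K_2}\oplus K_s)$, $K_1 + (C_4\oplus K_{s-1})$ and $\overline{(s+1)K_2}$, or else a connected obstruction whose complement splits into components exactly as described in the last item of that theorem. In every case the task reduces to counting vertices and comparing the total with $s+1$.

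First I would dispose of the four explicit families by direct counting. The graphs $2K_{s+1}$ and $\overline{(s+1)K_2}$ have $2s+2$ vertices each, while $K_2 + (\overline{K_2}\oplus K_s)$ and $K_1 + (C_4\oplus K_{s-1})$ have $s+4$ vertices each; since $2s+2\ge s+1$ and $s+4\ge s+1$, the desired bound holds immediately for all of these obstructions.

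For the remaining case I would write $\overline{G} = G_1 + \cdots + G_t$ with $t\le s$, where each $\overline{G_i}$ is a non-essential \emph{disconnected} cograph minimal $(s_i,1)$-polar obstruction and $\sum_{i=1}^t s_i = s-t+1$. The key observation is that, by the classification, a disconnected non-essential obstruction must itself be one of $2K_{s_i+1}$, $K_2 + (\overline{K_2}\oplus K_{s_i})$ or $K_1 + (C_4\oplus K_{s_i-1})$; hence, by the counts of the previous paragraph, $|V(G_i)| = |V(\overline{G_i})| \ge s_i + 1$ for every $i$. As complementation preserves the number of vertices, summing gives $|V(G)| = \sum_{i=1}^t |V(G_i)| \ge \sum_{i=1}^t (s_i+1) = (s-t+1) + t = s+1$, which is exactly the claimed inequality. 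For the few small values $s\le 1$, where the notion of an essential obstruction does not apply, I would simply note that the relevant obstructions---$\overline{K_2}$ when $s=0$ and $2K_2,\,C_4,\,C_5$ when $s=1$---all have order at least $s+1$.

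I do not expect any genuinely hard step: once the classification is invoked, everything is a short computation. The only point demanding care is the bookkeeping in the recursive case, namely confirming that each component $\overline{G_i}$ is indeed \emph{disconnected} and non-essential, so that it falls under one of the three explicit families rather than re-entering the connected recursive branch, and then applying the arithmetic identity $\sum_{i=1}^t (s_i+1) = (s-t+1)+t$ correctly. With this in place, the bound $|V(G)|\ge s+1$ follows uniformly in every case.
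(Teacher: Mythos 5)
Your proof is correct, but it takes a genuinely different route from the paper's. The paper proves the lemma by induction on $s$: the base cases $s=0,1$ are checked by hand, disconnected obstructions are bounded via Lemmas \ref{lem:TwoComponents}--\ref{lem:H+K1}, and in the connected case the components $G_1,\dots,G_t$ of $\overline{G}$ provided by Lemma \ref{lem:Decomposition(1,k)Obstructions} are bounded by applying the induction hypothesis to each $\overline{G_i}$, which is a non-essential minimal $(s_i,1)$-polar obstruction with $s_i<s$. You instead invoke the classification theorem directly (which is legitimate and non-circular: in the paper that theorem precedes this lemma and its proof does not rely on it) and avoid induction altogether. Your key observation is that in the recursive branch each $\overline{G_i}$ is required to be a \emph{disconnected} non-essential obstruction, hence one of the three explicit families $2K_{s_i+1}$, $K_2+(\overline{K_2}\oplus K_{s_i})$, $K_1+(C_4\oplus K_{s_i-1})$, whose orders can simply be counted; so the recursion needs to be unfolded only one level when all one wants is an order bound. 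This is arguably cleaner than the paper's induction. Two small repairs are needed. First, $C_5$ is not a cograph, so for $s=1$ the cograph minimal obstructions are only $2K_2$ and $C_4$ (harmless, since the order bound still holds). Second, the classification theorem is stated only for $s\ge 2$, so for components with $s_i\in\{0,1\}$ you cannot literally cite ``the classification''; you must fall back on the base-case enumeration (the disconnected obstructions there are $2K_1$ and $2K_2$, both instances of the family $2K_{s_i+1}$). With that wiring in place, your computation $\sum_{i=1}^t (s_i+1) = (s-t+1)+t = s+1$ closes the argument exactly as you describe.
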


	\begin{proof}
	
	We will proceed by induction on $s$.
	The unique cograph minimal $(s,1)$-polar obstruction for
	$s=0$ is $2 K_1$, while the unique two cograph minimal
	$(1,1)$-polar obstructions are $C_4$ and $2K_2$.
	This deals with the base case.
	
	Let $s$ be an integer, $s\ge 2$, and suppose that for every
	integer $N$ such that $N<s$, if $H$ is a non-essential
	cograph minimal $(N,1)$-polar obstruction, then $H$ has at
	least $N+1$ vertices.
	
	Let $G$ be a non-essential cograph minimal $(s,1)$-polar
	obstruction. Observe that if $G$ is disconnected, then by
	Lemmas \ref{lem:TwoComponents}, \ref{lem:H+K2} and
	\ref{lem:H+K1}, the order of $G$ is strictly greater than $s$.
	Else, $G$ is a connected cograph and its complement,
	$\overline G$, is a disconnected cograph minimal $(1,s)$-polar
	obstruction; Lemmas \ref{theo:Components Of(1,k)Obstructions}
	and \ref{lem:Decomposition(1,k)Obstructions} imply that either
	$\overline G$ is isomorphic to $(s+1)K_2$, which clearly has
	strictly more than $s+1$ vertices, or the components
	of $\overline{G}$ are $G_1,\dots,G_t$ for some
	integer $t\in \{2,\dots,s\}$, where $G_i$ is a cograph minimal
	$(1,s_i)$-polar obstruction for $1\le i\le t$ and some nonnegative
	integer $s_i$, and $\sum_{i=1}^t s_i = s-t+1$. Nevertheless,
	in the latter case we have by induction hypothesis that for every
	$i\in\{1,\dots,t\}$, the order of $G_i$ is at least $s_i +1$,
	which implies that
	\begin{eqnarray*}
	|\overline G| & = & |G_1| + \dots + |G_t| \\
		& \geq & (s_1 +1) + \dots + (s_t +1) \\
		& = & s_1 + \dots +s_t +t \\
	 	& = & s - t + 1 + t \\
		& = & s+1,
	\end{eqnarray*}
	which ends the proof.
	\end{proof}

\begin{theorem} \label{thm:essentials}
Let $G$ be a cograph. Then $G$ admits an $(s,1)$-polar
partition for some $s \ge 2$ if and only if it does not contain
any of the essential obstructions (Figure \ref{fig:essentials})
as an induced subgraph.
\end{theorem}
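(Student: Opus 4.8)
The plan is to prove both implications separately, with the forward direction reducing to heredity plus Lemma~\ref{lem:EssentialObstructions}, and the backward direction resting entirely on the order bound of Lemma~\ref{lem:OrderOfObstructions}. The one structural fact I would isolate first is that $(s,1)$-polarity is \emph{hereditary}: restricting an $(s,1)$-polar partition $(A,B)$ of $G$ to the vertex set of an induced subgraph $H$ yields the partition $(A\cap V(H), B\cap V(H))$, whose first part induces a complete multipartite graph with at most $s$ parts and whose second part induces a clique. Consequently the property ``$G$ admits an $(s,1)$-polar partition for some $s\ge 2$'' is itself hereditary, since the witnessing value of $s$ for $G$ also witnesses it for every induced subgraph. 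I will abbreviate this property as $\mathcal{P}$.

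For the forward implication I would assume $G$ has property $\mathcal{P}$ and argue that no essential obstruction can occur as an induced subgraph. By Lemma~\ref{lem:EssentialObstructions}, each of the four essential graphs is a minimal $(s,1)$-polar obstruction for every $s\ge 2$; in particular none of them is $(s,1)$-polar for any $s\ge 2$, so none of them has property $\mathcal{P}$. Since $\mathcal{P}$ is hereditary and $G$ has it, every induced subgraph of $G$ has $\mathcal{P}$, and therefore $G$ contains none of the essential obstructions.

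For the backward implication I would argue by contraposition: assume $G$ lacks $\mathcal{P}$ and exhibit an essential obstruction inside $G$. A cograph on at most one vertex is a clique and hence trivially $(2,1)$-polar, so lacking $\mathcal{P}$ forces $n := |V(G)| \ge 2$. Because $G$ is not $(s,1)$-polar for any $s\ge 2$, it is in particular not $(n,1)$-polar; since the class of $(n,1)$-polar graphs is hereditary and $G$ is finite, $G$ contains, as an induced subgraph, a minimal $(n,1)$-polar obstruction $H$. The decisive move is the choice $s=n$: if $H$ were non-essential, Lemma~\ref{lem:OrderOfObstructions} would give $|V(H)| \ge n+1$, contradicting the fact that $H$ is an induced subgraph of the $n$-vertex graph $G$. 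Hence $H$ must be essential, so $G$ contains an essential obstruction, completing the contrapositive.

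The only nontrivial ingredient is Lemma~\ref{lem:OrderOfObstructions}, which is already available; once it is in hand there is no real obstacle, and the ``main trick'' is simply to test $(n,1)$-polarity with $s$ set equal to the order of $G$, so that the order bound eliminates every non-essential minimal obstruction purely on cardinality grounds. I would expect the only care needed to be in stating the heredity observation cleanly and in dispatching the degenerate case where $G$ has fewer than two vertices.
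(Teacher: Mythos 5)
Your proof is correct and follows essentially the same route as the paper: the converse direction via Lemma~\ref{lem:EssentialObstructions} and heredity, and the forward direction by taking $s$ equal to the order $n$ of $G$, extracting a minimal $(n,1)$-polar obstruction, and using the order bound of Lemma~\ref{lem:OrderOfObstructions} to force it to be essential. Your explicit handling of the case $|V(G)|\le 1$ and the heredity observation are minor elaborations of details the paper leaves implicit.
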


	\begin{proof}
	Let $G$ be a cograph such that for every integer $s$, $s\ge 2$,
	$G$ is not an $(s,1)$-polar cograph. Particularly $G$ is not a
	$(n,1)$-polar cograph, where $n$ stands for the order of $G$,
	and therefore $G$ contains a cograph minimal $(n,1)$-polar
	obstruction $H$ as induced subgraph. If $H$ is not essential,
	then, by Lemma \ref{lem:OrderOfObstructions} we have that
	$H$ has order at least $n+1$, which is impossible since $H$
	is a subgraph of $G$. Thus $G$ contains an essential obstruction
	as an induced subgraph. The converse implication follows directly
	from Lemma \ref{lem:EssentialObstructions}.
	\end{proof}

\section{On the number of cograph minimal $(s,1)$-polar
obstructions}\label{sec:Number}

Taking into consideration the number of cograph minimal
$(s,1)$-polar obstructions for $s \in \{ 0, 1, 2 \}$, it would
seem that the number of this obstructions does not grow
too fast.   Nonetheless, a quick estimation shows that
the growth rate of the families of minimal obstructions
is subexponential at best, and we have exponential upper
bounds (with an extremely bad overestimation).

Let $s$ be an integer, $s \geq 2$. In view of Lemmas
\ref{lem:EssentialObstructions} to \ref{lem:H+K1}, there
are exactly seven disconnected cograph minimal $(s,1)$-polar
obstructions, namely $2K_{s+1}$,
$K_1 + (C_4\oplus K_{s-1})$,
$K_2 + (\overline{K_2}\oplus K_s)$, and the four essential
obstructions depicted in Figure \ref{fig:essentials}.
Observe that the complements of the first three
graphs mentioned above are the unique connected cograph
minimal $(1,s)$-polar obstructions that are
$(1, s+1)$-polar cographs.

On the other hand,
to count the number of connected cograph minimal
$(s,1)$-polar obstructions is equivalent to count the number
of disconnected cograph minimal $(1,s)$-polar obstructions.
Furthermore, by Lemma
\ref{lem:Decomposition(1,k)Obstructions},
each disconnected cograph minimal $(1,s)$-polar obstruction
$G$ with components $G_0, \dots, G_k$ satisfies that $G_i$ is
a connected cograph minimal $(1,s_i)$-polar obstruction
that is a $(1, s_i+1)$-polar cograph for each
$i\in\{0,\dots,k\}$, with
$s = s_0 + \dots + s_k + k$ where each term is a non
negative integer. Since there is exactly one connected
cograph minimal $(1, s_i)$-polar obstruction for $s_i
\in \{ 0,1 \}$, and there are exactly three of them
which are connected for $s_i \ge 2$ we have the following.

\begin{proposition}\label{ObstructionsByPartition}
Let $s$ be an integer, $s \ge 2$. If $s$ is expressed as a sum
of non negative integers, $s = s_0 + s_1 + \dots + s_k + k$,
and there are exactly $n$ of the terms $s_i$ greater than 1,
then there are at most $3^n$ non isomorphic
disconnected cograph minimal
$(1,s)$-polar obstructions $G$
with connected components $G_0,\dots,G_k$ such that
$G_i$ is a cograph minimal $(1, s_i)$-polar obstruction
for each $i \in \{ 0, \dots, k \}$.
\end{proposition}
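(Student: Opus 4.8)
The plan is to read the statement as a direct consequence of the decomposition established in Lemma~\ref{lem:Decomposition(1,k)Obstructions}, combined with a count, for each fixed value of $s_i$, of the connected cograph minimal $(1,s_i)$-polar obstructions that are $(1,s_i+1)$-polar. First I would fix the given expression $s = s_0 + \dots + s_k + k$ and observe that, by Lemma~\ref{lem:Decomposition(1,k)Obstructions}, any disconnected cograph minimal $(1,s)$-polar obstruction $G$ whose components realize exactly these values $s_i$ must have each component $G_i$ a connected cograph minimal $(1,s_i)$-polar obstruction that is moreover $(1,s_i+1)$-polar. Thus the problem reduces to counting, independently for each index $i$, the number of admissible choices for $G_i$.

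Second, I would determine the number of connected cograph minimal $(1,s_i)$-polar obstructions that are $(1,s_i+1)$-polar, for each possible value of $s_i$. For $s_i = 0$ the unique such obstruction is $K_2$, and for $s_i = 1$ it is $C_4$ (the only connected cograph among the minimal split obstructions $2K_2, C_4, C_5$, since $C_5$ is not a cograph and $2K_2$ is disconnected); in both cases there is exactly one choice. For $s_i \ge 2$, the analysis in Lemmas~\ref{lem:EssentialObstructions}--\ref{lem:H+K1} (as already invoked within the proof of Lemma~\ref{lem:Decomposition(1,k)Obstructions}) shows that the connected cograph minimal $(1,s_i)$-polar obstructions that are $(1,s_i+1)$-polar are precisely the complements of $2K_{s_i+1}$, $K_2 + (\overline{K_2}\oplus K_{s_i})$ and $K_1+(C_4\oplus K_{s_i-1})$, giving exactly three choices.

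Third, I would assemble the count. Writing $c_i$ for the number of admissible choices of $G_i$, we have $c_i = 1$ when $s_i \le 1$ and $c_i = 3$ when $s_i \ge 2$. Since exactly $n$ of the terms satisfy $s_i > 1$, the number of ordered tuples $(G_0,\dots,G_k)$ of admissible components equals $\prod_{i=0}^k c_i = 3^n$. Every disconnected obstruction $G$ of the prescribed form arises as the disjoint union $G_0 + \dots + G_k$ of one such tuple, so the number of non-isomorphic graphs $G$ is at most the number of tuples, namely $3^n$.

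The step requiring the most care is the second one, the enumeration of the connected minimal obstructions for each fixed $s_i$; everything else is bookkeeping. I note that the bound is only an upper bound, not an equality, for two reasons: distinct tuples that differ merely by permuting indices with equal $s_i$ values yield isomorphic disjoint unions, and a priori two genuinely different component choices could still produce isomorphic graphs. Both effects can only reduce the count below $3^n$, which is exactly what the \emph{at most} in the statement records.
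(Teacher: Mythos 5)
Your proposal is correct and follows essentially the same route as the paper: the paper also derives this proposition directly from Lemma~\ref{lem:Decomposition(1,k)Obstructions} together with the observation that there is exactly one admissible connected component when $s_i \in \{0,1\}$ and exactly three (the complements of $2K_{s_i+1}$, $K_2 + (\overline{K_2}\oplus K_{s_i})$ and $K_1+(C_4\oplus K_{s_i-1})$) when $s_i \ge 2$, multiplying these counts to get the bound $3^n$. Your additional remarks on why the bound is only an upper bound, and the implicit identification of the prescribed values $s_i$ with the parameters $k_i$ produced by the decomposition lemma, are consistent with (and no less rigorous than) the paper's own presentation.
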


Let $s$ be a non negative integer, and let $D(s)$ be the number
of distinct ways in which $s$ can be expressed as a sum
$s = s_0 + s_1 + \dots + s_k + k$, where $k\ge 1$ and
$s_i$ is a non negative integer for each $i \in \{ 0,1
\dots,k\}$, and where we are
considering two of this representations of $s$ as the same
when they correspond to a permutation of the terms $s_i$.
Thus, the preceding lemma
gives straightforward bounds for the number of disconnected
cograph minimal $(1,s)$-polar cographs in terms of $D(s)$.

\begin{lemma}\label{Bounds}
Let $s$ be an integer, $s \ge 2$. Then the number of
disconnected cograph minimal $(1,s)$-polar obstructions,
$n(s)$, is such that
$$D(s) \le n(s) \le 3^{m} \cdot D(s) < 3^{s/2} \cdot D(s),$$
where $m$ is the maximum possible number of terms
$s_i$ greater that one in a decomposition
$s = s_0 + \dots + s_k +k$ of $s$ with $k \ge 1$.
\end{lemma}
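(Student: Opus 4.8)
The plan is to prove the three inequalities in the chain $D(s)\le n(s)\le 3^{m}\cdot D(s)<3^{s/2}\cdot D(s)$ one at a time, organizing everything around the map $\Phi$ that Lemma \ref{lem:Decomposition(1,k)Obstructions} makes available: to each disconnected cograph minimal $(1,s)$-polar obstruction $G$ with components $G_0,\dots,G_k$, assign the unordered tuple $(s_0,\dots,s_k)$, where $G_i$ is a connected cograph minimal $(1,s_i)$-polar obstruction. This yields a decomposition $s=s_0+\dots+s_k+k$ with $k\ge 1$ and each $s_i\ge 0$, i.e.\ exactly one of the $D(s)$ decompositions counted in the statement. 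Since $s_i$ is an isomorphism invariant of the component $G_i$, the tuple $\Phi(G)$ is well defined up to permutation, so $\Phi$ lands in the set of $D(s)$ decompositions. All three bounds will be read off from $\Phi$ and its fibres.

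For the lower bound $D(s)\le n(s)$, I would show that $\Phi$ is surjective. Given any decomposition $s=s_0+\dots+s_k+k$ with $k\ge 1$, choose for each index $i$ a connected cograph minimal $(1,s_i)$-polar obstruction $G_i$ that is $(1,s_i+1)$-polar; such a graph exists for every value of $s_i$ (it is $K_2$ when $s_i=0$, it is $C_4$ when $s_i=1$, and it is one of the three connected obstructions when $s_i\ge 2$). By Lemma \ref{teo:SumsOfObstructions} the disjoint union $G_0+\dots+G_k$ is then a disconnected cograph minimal $(1,s)$-polar obstruction whose image under $\Phi$ is the given decomposition. Hence every decomposition is realized, $\Phi$ is surjective, and a surjection forces its domain to be at least as large as its codomain, giving $n(s)\ge D(s)$.

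For the upper bound $n(s)\le 3^{m}\cdot D(s)$, I would count $n(s)$ by summing the sizes of the fibres of $\Phi$ over all $D(s)$ decompositions. Proposition \ref{ObstructionsByPartition} bounds the fibre over a decomposition having exactly $n$ parts greater than $1$ by $3^{n}$, and since $m$ is by definition the maximum such $n$ over all decompositions, every fibre has at most $3^{m}$ elements. Summing over the $D(s)$ decompositions yields $n(s)\le 3^{m}\cdot D(s)$.

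The remaining strict inequality reduces to the elementary estimate $m<s/2$. In any decomposition $s=s_0+\dots+s_k+k$ with $k\ge 1$, if $n$ of the parts $s_i$ exceed $1$, then those parts alone contribute at least $2n$ to $\sum_i s_i$, so $s\ge 2n+k\ge 2n+1$ and therefore $n\le (s-1)/2<s/2$; taking the maximum over all decompositions gives $m<s/2$ and hence $3^{m}<3^{s/2}$. Because $s\ge 2$ guarantees at least one admissible decomposition (for instance $s=(s-1)+0+1$), we have $D(s)\ge 1$, so multiplying through by $D(s)$ preserves strictness. The only genuinely delicate point in the whole argument is the surjectivity step of the lower bound: one must confirm that each admissible decomposition is actually realized by an obstruction, which rests precisely on the availability of a connected $(1,s_i+1)$-polar minimal $(1,s_i)$-polar obstruction for every $s_i$ together with Lemma \ref{teo:SumsOfObstructions} certifying that the disjoint union is minimal with parameter exactly $s$. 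The two remaining inequalities are routine once $\Phi$ and its fibres have been set up.
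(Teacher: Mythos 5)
Your proof is correct and follows essentially the same route as the paper's: the left inequality comes from realizing every decomposition via Lemma \ref{teo:SumsOfObstructions} (the paper states this tersely, you make the surjectivity explicit), the middle inequality from Proposition \ref{ObstructionsByPartition} applied fibre by fibre, and the strict inequality from the elementary fact $m < s/2$. Your write-up is simply a more careful, fully detailed version of the paper's three-sentence argument, including the well-definedness of the parameters $s_i$ and the existence of connected obstructions for $s_i \in \{0,1\}$, which the paper leaves implicit.
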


	\begin{proof}
	The left inequality is due to the fact that for each
	decomposition of $s$
	as a sum of non negative integers
	$s = s_0 + \dots + s_k +k$, there is at least one
	disconnected cograph minimal $(1,s)$-polar obstruction.
	The inequality in the middle is an direct consequence
	of Lemma \ref{ObstructionsByPartition}, while the last
	inequality follows from the trivial fact that $m < s/2$.
	\end{proof}

It is evident that every non negative integer $s$ is
decomposed in a sum of non negative integers
$s = s_0 + \dots + s_k +k$ with $k\ge 1$, if and only if
$s+1$ is decomposed in a sum of positive integers
$s+1 = s'_0 + \dots + s'_k$, where $s'_i = s_i +1$ for
$0 \le i \le k$. Thus, the number of distinct ways in which
a positive integer $s$ can be written as a sum of positive
integers, denoted $p(s)$, satisfies the equality
$D(s) = p(s+1)-1$. The parameter $p(s)$ has been
extensively studied, and particularly, Hardy and Ramanujan
gave in 1918 the following asymptotic approximation.

\begin{theorem}
Let $p(n)$ be the number of ways of writing the
positive integer $n$ as a sum of positive integers, where
the order of the terms is not considered. Then
$$p(n) \sim \frac{1}{4n\sqrt{3}} \exp\left(\pi\sqrt{\frac{2n}{3}}\right).$$
\end{theorem}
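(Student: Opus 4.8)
The plan is to recover $p(n)$ from its generating function by the saddle-point (circle) method. Euler's identity gives $F(q) := \sum_{n\ge 0} p(n)q^n = \prod_{k\ge 1}(1-q^k)^{-1}$, which is analytic on the open unit disc, so by Cauchy's formula $p(n) = \frac{1}{2\pi i}\oint F(q)\,q^{-n-1}\,dq$ over a circle $|q| = e^{-t}$ with $0 < t < 1$ still to be chosen. Parametrising $q = e^{-t}e^{i\theta}$ turns this into $p(n) = \frac{1}{2\pi}\int_{-\pi}^{\pi}F(e^{-t}e^{i\theta})\,e^{nt}e^{-in\theta}\,d\theta$, whose integrand peaks sharply at $\theta = 0$ as $n\to\infty$. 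The whole problem then reduces to (i) a precise description of $F(e^{-t})$ as $t\to 0^+$, and (ii) a saddle-point evaluation of the resulting integral.

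For step (i) I would establish the asymptotic expansion of $\log F(e^{-t})$ near $t=0$. The cleanest route is the modular transformation law of the Dedekind eta function: since $\prod_{k\ge1}(1-e^{-kt}) = e^{t/24}\,\eta(it/2\pi)$ and $\eta(-1/\tau) = \sqrt{-i\tau}\,\eta(\tau)$, one obtains $F(e^{-t}) = \sqrt{t/2\pi}\,\exp\!\big(\tfrac{\pi^2}{6t} - \tfrac{t}{24}\big)$ up to exponentially small errors. The same expansion follows from a Mellin-transform analysis of $\log F(e^{-t}) = \sum_{k\ge1}\big(k(e^{kt}-1)\big)^{-1}$, whose transform $\Gamma(s)\zeta(s)\zeta(s+1)$ has a simple pole at $s=1$ contributing $\tfrac{\pi^2}{6t}$ and a double pole at $s=0$ contributing $\tfrac12\log(t/2\pi)$. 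Either way, $\log F(e^{-t}) = \frac{\pi^2}{6t} + \frac12\log\frac{t}{2\pi} + O(t)$.

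For step (ii) set $\Phi(t) = \frac{\pi^2}{6t} + nt$, the dominant part of $\log\!\big(F(e^{-t})e^{nt}\big)$. Its unique positive critical point is $t_0 = \pi/\sqrt{6n}$, where $\Phi(t_0) = \pi\sqrt{2n/3}$ and $\Phi''(t_0) = (6n)^{3/2}/3\pi$. Choosing the radius $e^{-t_0}$ and applying the saddle-point approximation along the steepest-descent direction $\theta\mapsto t_0 + i\theta$, the Gaussian integral contributes a factor $1/\sqrt{2\pi\Phi''(t_0)}$, while the subleading term from step (i) contributes $\sqrt{t_0/2\pi}$. A direct simplification gives $\sqrt{t_0/2\pi}\big/\sqrt{2\pi\Phi''(t_0)} = \sqrt3/12n = 1/(4\sqrt3\,n)$, so that $p(n)\sim \frac{1}{4n\sqrt3}\exp\!\big(\pi\sqrt{2n/3}\big)$, exactly matching the claimed constant.

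The hard part is the rigorous justification of the saddle-point step, namely showing that the arcs $|\theta|\ge\delta$ contribute negligibly compared with the peak. This requires a uniform bound proving $|F(e^{-t_0}e^{i\theta})|$ is exponentially smaller than $F(e^{-t_0})$ away from $\theta=0$, which one extracts from $\log|F| = \sum_{k\ge1}\sum_{m\ge1}\frac{\cos(mk\theta)}{m}e^{-mkt_0}$ by exploiting the cancellation among the cosines. This is precisely the analytic core of the Hardy–Ramanujan circle method, and the only place where genuine estimates rather than formal expansions are needed; once this bound is in place, the major arc reproduces the Gaussian computation above and the asymptotic is proved.
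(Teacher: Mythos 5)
The first thing to note is that the paper does not prove this statement at all: it is quoted as Hardy and Ramanujan's 1918 theorem and is used only to estimate the quantity $D(s)=p(s+1)-1$ in Section~\ref{sec:Number}. So there is no internal argument to compare yours against, and your proposal must be judged as a self-contained proof of a classical result. As a derivation, your outline is the standard saddle-point form of the circle method, and every formal computation in it is correct: the identity $\prod_{k\ge 1}(1-e^{-kt})=e^{t/24}\,\eta(it/2\pi)$ and the transformation law give $F(e^{-t})=\sqrt{t/2\pi}\,\exp\!\left(\tfrac{\pi^2}{6t}-\tfrac{t}{24}\right)\!\left(1+O(e^{-4\pi^2/t})\right)$; the Mellin route via $\Gamma(s)\zeta(s)\zeta(s+1)$ (simple pole at $s=1$, double pole at $s=0$) gives the same expansion; the saddle data $t_0=\pi/\sqrt{6n}$, $\Phi(t_0)=\pi\sqrt{2n/3}$, $\Phi''(t_0)=(6n)^{3/2}/(3\pi)$ are right; and the prefactor arithmetic $\sqrt{t_0/2\pi}\cdot\left(2\pi\Phi''(t_0)\right)^{-1/2}=\sqrt{3}/(12n)=1/(4\sqrt{3}\,n)$ indeed reproduces the claimed constant.

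Nevertheless, as a proof the proposal has a genuine gap, and it is the one you flag yourself: the negligibility of the arcs $|\theta|\ge\delta$ is announced, not established, and that estimate is the analytic core of the theorem rather than a routine appendix. Concretely, two things are missing. First, a quantitative minor-arc saving, e.g. $\log F(e^{-t_0})-\log|F(e^{-t_0}e^{i\theta})|=\sum_{k,m\ge 1}m^{-1}e^{-kmt_0}\left(1-\cos(km\theta)\right)\ge c(\theta)/t_0$, where one must track how the saving degrades as $\theta$ approaches $0$: since all terms are nonnegative, the $m=1$ terms give $\asymp 1/t_0$ for $\theta$ bounded away from $0$, but near the saddle the saving is only $\asymp\theta^2/t_0^3$, so the arc-splitting parameter $\delta$ cannot be a constant. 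Second, and related, the major arc needs the eta expansion to hold uniformly for the complex values $t=t_0-i\theta$ (true, since $\operatorname{Re}t>0$, but it must be stated with error terms), and $\delta$ must be chosen compatibly with the Gaussian width $\Phi''(t_0)^{-1/2}\asymp n^{-3/4}$, which is much smaller than $t_0\asymp n^{-1/2}$; the exponential saving must also beat the polynomial prefactor $1/(4\sqrt{3}\,n)$ of the main term, which it does because $e^{-c/t_0}=e^{-c'\sqrt{n}}$. None of these steps is in doubt---they are carried out in every complete account of the Hardy--Ramanujan theorem---but until they are written out, what you have is a correct roadmap with all constants verified, not a proof; for the purpose this theorem serves in the paper, the appropriate substitute is exactly what the authors do, namely a citation.
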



\section{Conclusions} \label{sec:Conclusions}

Exact lists of cograph minimal $(s,k)$-polar obstructions
are known when $\max \{ s, k \} \le 2$, and also when
$\min \{ s, k \} = 0$.   The results in the present work
seem to indicate that there are too many cograph minimal
$(s,k)$-polar obstructions to expect to find exhaustive
lists for arbitrary values of $s$ and $k$.   Nonetheless,
it was a pleasant surprise to find a recursive characterization
which is rather simple to obtain all the cograph minimal
$(s,1)$-polar obstructions.   This result makes us wonder
whether a similar result may be achieved for any values
of $s$ and $k$.   In particular, we already have some
encouraging partial results for the case when $s = k$,
showing that maybe a combination of recursion together
with a classification of some families of minimal obstructions
may cover the whole family of minimal obstructions.

Also, taking into account the results in \cite{ekimDAM156}
and Theorem \ref{thm:essentials}, it seems possible to find
the complete list of minimal obstructions to the problem of
recognizing $(s, t)$-polar cographs, for some integer $t$
and a fixed integer $s$, $s \ge 2$.

\end{document}